\sloppy \pagestyle{plain}
\theoremstyle{definition}
\newtheorem{theorem}[equation]{Theorem}
\newtheorem*{theorem*}{Theorem}
\newtheorem{lemma}[equation]{Lemma}
\newtheorem{corollary}[equation]{Corollary}
\newtheorem{example}[equation]{Example}
\newtheorem{definition}[equation]{Definition}
\newtheorem*{definition*}{Definition}
\theoremstyle{remark}
\newtheorem{remark}[equation]{Remark}
\makeatletter\@addtoreset{equation}{section}
\makeatletter\@addtoreset{section}{part}
\def \P {\mathbb{P}}
\def \C {\mathbb{C}}
\def \Z {\mathbb{Z}}
\def \wt {\mathrm{wt}}
\def \le {\leqslant}
\def \ge {\geqslant}
\author{Ivan Cheltsov and Constantin Shramov}
\title{Del Pezzo zoo}
\thanks{The first author was supported by the~grants NSF DMS-0701465 and
EPSRC EP/E048412/1, the~second~author was supported by the~grants
RFFI No. 08-01-00395-a, N.Sh.-1987.2008.1 and EPSRC EP/E048412/1.}
\begin{document}

\begin{abstract}
We study del Pezzo surfaces that are  quasismooth and well-formed
weighted hypersurfaces. In particular, we find all such surfaces
whose~$\alpha$-in\-va\-ri\-ant~of~Tian~is~greater~than~$2/3$.
\end{abstract}

\maketitle

All varieties are  assumed to be complex,  projective and normal.

\section{Introduction}
\label{section:intro}

Let $X$ be a~hypersurface in $\mathbb{P}(a_{0},\ldots,a_{n})$ of
degree $d$, where $a_0\leqslant\ldots\leqslant a_n$.
Then $X$ is
given~by
$$
\phi\big(x_{0},\ldots,x_{n}\big)=0\subset\mathbb{P}\big(a_{0},\ldots,a_{n}\big)\cong\mathrm{Proj}\Big(\mathbb{C}\big[x_{0},\ldots,x_{n}\big]\Big),
$$
where $\mathrm{wt}(x_{i})=a_{i}$, and $\phi$ is a~quasihomogeneous
polynomial of degree $d$. The~equation
$$
\phi\big(x_{0},\ldots,x_{n}\big)=0\subset\mathbb{C}^{n+1}\cong\mathrm{Spec}\Big(\mathbb{C}\big[x_{0},\ldots,x_{n}\big]\Big),
$$
defines a~quasihomogeneous singularity $(V,O)$, where $O$ is
the~origin of $\mathbb{C}^{n+1}$.

\begin{definition}
\label{definition:quasismooth} The~hypersurface $X$ is quasismooth
if the~singularity $(V,O)$ is isolated.
\end{definition}

Suppose that $X$ is quasismooth.

\begin{remark}\label{remark:rational-vs-canonical-etc}
It follows from \cite[Theorem~7.9]{Ko97}, \cite[Proposition~8.13]{Ko97}
and~\cite[Remark~8.14.1]{Ko97} that $\sum_{i=0}^{n}a_{i}>d$
if and only if
the singularity $(V,O)$ is canonical. Moreover,
since $(V,O)$ is Gorenstein, it is canonical if and only if
it is rational (see~\cite[Theorem~11.1]{Ko97}).
\end{remark}

\begin{definition}
\label{definition:well-formed} The~hypersurface
$X\subset\mathbb{P}(a_{0},\ldots,a_{n})$ is well-formed if
$$
\mathrm{gcd}\big(a_0,\ldots,\widehat{a_i},\ldots,\widehat{a_j},\ldots,a_n\big)\mid d%
$$
and $\mathrm{gcd}(a_0,\ldots,\widehat{a_i},\ldots,a_n)=1$ for
every $i\ne j$.
\end{definition}

Suppose that $X$ is well-formed. Then $\sum_{i=0}^{n}a_{i}>d$
if and only if $X$ is a~Fano variety. Put
$$
I=\sum_{i=0}^{n}a_{i}-d,
$$
and suppose that $\sum_{i=0}^{n}a_{i}>d$.  We call $I$ the~index
of the~Fano variety $X$.
Note that $I$ should not be confused with the Fano index of $X$
(see Remark~\ref{remark:Bishop-Lichnerowicz}).

\begin{definition}
\label{definition:threshold} The~global log canonical threshold of
the~Fano variety $X$ is the~number
$$
\mathrm{lct}\big(X\big)=\mathrm{sup}\left\{\lambda\in\mathbb{Q}\ \left|\ %
\aligned
&\text{the~log pair}\ \Big(X, \lambda D\Big)\ \text{is log canonical}\\
&\text{for every effective $\mathbb{Q}$-divisor}\ D\equiv-K_{X}\\
\endaligned\right.\right\}\in\mathbb{R}.
$$
\end{definition}

The~number $\mathrm{lct}(X)$ is an~algebraic counterpart of
the~$\alpha$-invariant introduced in \cite{Ti87}. In particular,
the global log canonical threshold and the $\alpha$-invariant are known
to coincide in the nonosingular case (see e.\,g. \cite[Theorem~A.3]{ChSh08c}).
One of the important applications of (either of) these invariants
is the problem of existence of an orbifold K\"ahler--Einstein metric on
the variety~$X$.

\begin{theorem}[{\cite{Ti87}, \cite{DeKo01}}] \label{theorem:KE} The variety $X$ admits
an~orbifold K\"ahler--Einstein metric~if
$$
\mathrm{lct}\big(X\big)>\frac{\mathrm{dim}\big(X\big)}{\mathrm{dim}\big(X\big)+1}.
$$
\end{theorem}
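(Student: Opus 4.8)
The plan is to deduce this from Tian's analytic criterion, after matching the algebraic quantity $\lct(X)$ with its analytic incarnation. Recall that to the Fano orbifold $X$ one attaches the $\alpha$-invariant $\alpha(X)$, defined as the supremum of those $\alpha>0$ for which there is a constant $C_\alpha$ with $\int_X e^{-\alpha(\phi-\sup_X\phi)}\,\omega_0^n\leq C_\alpha$ for every $\omega_0$-plurisubharmonic function $\phi$, where $\omega_0$ is a fixed orbifold K\"ahler form in $c_1(X)$ and the integral is taken in the orbifold sense. Tian's theorem asserts that $\alpha(X)>\dim(X)/(\dim(X)+1)$ already guarantees an orbifold K\"ahler--Einstein metric. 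So the proof splits into two tasks: first, identify $\lct(X)$ with the analytic $\alpha$-invariant; second, run the analytic existence argument.

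For the first task I would invoke the Demailly--Koll\'ar dictionary between complex singularity exponents of plurisubharmonic functions and log canonical thresholds of effective divisors. Given $\phi$ as above, its unbounded locus is governed by an effective $\mathbb{Q}$-divisor $D\equiv-K_X$, and the local integrability exponent of $e^{-\alpha\phi}$ near a point is controlled precisely by the log canonical threshold of $(X,\alpha D)$ there. Running this correspondence in both directions, together with an approximation of arbitrary $\omega_0$-plurisubharmonic functions by those with divisorial (analytic) singularities, yields $\alpha(X)=\lct(X)$. In the orbifold setting one performs this comparison in local uniformizing charts, where the quotient singularity is resolved by passing to the smooth cover, so the usual smooth statement applies upstairs and descends.

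For the second task I would solve the orbifold complex Monge--Amp\`ere equation
$$
\big(\omega_0+\ddbar\phi_t\big)^n=e^{h_0-t\phi_t}\,\omega_0^n
$$
by the continuity method, where $h_0$ is the orbifold Ricci potential of $\omega_0$ and $t$ runs over $[0,1]$; a solution at $t=1$ is a K\"ahler--Einstein potential. Solvability at $t=0$ is Yau's theorem, and openness of the set of solvable $t$ follows from the implicit function theorem, the relevant linearized operator having trivial kernel for $t<1$. The crux is closedness, which reduces to a uniform $C^0$-estimate for $\phi_t$; once that is secured, Yau's $C^2$-estimate and the Evans--Krylov and Calabi higher-order estimates bootstrap to uniform $C^\infty$-bounds, all of which are local and hence valid in orbifold charts.

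The main obstacle, and the place where the hypothesis enters, is exactly this $C^0$-estimate. The standard argument bounds $\sup_X\phi_t-\inf_X\phi_t$ by an expression controlled by $\int_X e^{-\alpha\phi_t}\,\omega_0^n$ for a suitable $\alpha$, and the condition $\alpha(X)>\dim(X)/(\dim(X)+1)$ is precisely what keeps this integral finite for an $\alpha$ large enough to close the Moser-type iteration and absorb the $-t\phi_t$ term all the way up to $t=1$. The delicate points are to run the integral estimates in the orbifold category, ensuring the Sobolev and Poincar\'e constants depend only on the orbifold, and to verify that the identification $\alpha(X)=\lct(X)$ from the first task supplies the needed integrability uniformly over the whole family $\phi_t$. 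Granting these, the continuity method reaches $t=1$ and produces the desired orbifold K\"ahler--Einstein metric.
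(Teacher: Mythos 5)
Your proposal is correct and follows essentially the same route as the paper, which does not reprove this statement but simply cites Tian's analytic criterion together with the Demailly--Koll\'ar identification of $\mathrm{lct}(X)$ with the analytic $\alpha$-invariant (\cite{Ti87}, \cite{Ti90}, \cite{DeKo01}, and \cite[Appendix~A]{ChSh08c}). Your two-step outline --- first establishing $\alpha(X)=\mathrm{lct}(X)$ via the Demailly--Koll\'ar dictionary in orbifold charts, then running the continuity method for the Monge--Amp\`ere equation with the $C^0$-estimate supplied by the hypothesis $\alpha(X)>\dim(X)/(\dim(X)+1)$ --- is precisely the argument contained in those references.
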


There are Fano orbifolds that do not admit orbifold
K\"ahler--Einstein metrics (see \cite{Mat57},
\cite{Fu83},\cite{GaMaSpaYau06}).

\begin{theorem}[\cite{GaMaSpaYau06}]
\label{theorem:Bishop-Lichnerowicz} The variety $X$ admits no
K\"ahler--Einstein metrics if either $I>na_{0}$ or
$$
dI^{n}>n^n\prod_{i=0}^{n}a_{i}.%
$$
\end{theorem}

The two inequalities mentioned in Theorem~\ref{theorem:Bishop-Lichnerowicz}
are known as Lichnerowicz and Bishop obstructions, respectively.
A remarkable fact is that in our case they are not independent.
Namely, we prove the~following result in
Section~\ref{section:obstructions}.

\begin{theorem}
\label{theorem:Bishop-versus-Lichnerowicz} Let
$\bar{a}_{0}\leqslant\bar{a}_{1}\leqslant\ldots\leqslant\bar{a}_{n}$
and $\bar{d}$ be positive real numbers such that
$$
\bar{d}\Bigg(\sum_{i=0}^{n}\bar{a}_{i}-\bar{d}\Bigg)^{n}>n^n\prod_{i=0}^{n}\bar{a}_{i},
$$
and $\bar{d}<\sum_{i=0}^{n}\bar{a}_{i}$. Then
$\sum_{i=0}^{n}\bar{a}_{i}-\bar{d}>n\bar{a}_{0}$.
\end{theorem}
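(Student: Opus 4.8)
The plan is to prove the contrapositive. Writing $I = \sum_{i=0}^n \bar a_i - \bar d$ for the (real) index, the hypothesis $\bar d < \sum_{i=0}^n \bar a_i$ says exactly that $I > 0$, and the Bishop inequality reads $\bar d\, I^n > n^n \prod_{i=0}^n \bar a_i$. Since I want to conclude $I > n\bar a_0$, I will instead assume $I \leq n\bar a_0$ together with $I > 0$ and deduce $\bar d\, I^n \leq n^n \prod_{i=0}^n \bar a_i$, contradicting Bishop.

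The key move is a change of variables that linearizes the product. Set $t = I/n > 0$. Because $\bar a_0$ is the smallest of the $\bar a_i$ and $I \leq n\bar a_0$, every $\bar a_i$ satisfies $\bar a_i \geq \bar a_0 \geq t$, so I may write $\bar a_i = t + c_i$ with $c_i \geq 0$. Two elementary identities then do all the work. First,
$$
\bar d = \sum_{i=0}^n \bar a_i - I = (n+1)t + \sum_{i=0}^n c_i - nt = t + \sum_{i=0}^n c_i ;
$$
and second, expanding the product in terms of the elementary symmetric functions $e_k$ of $c_0,\dots,c_n$,
$$
\prod_{i=0}^n \bar a_i = \prod_{i=0}^n (t + c_i) = t^{n+1} + \Big(\sum_{i=0}^n c_i\Big) t^n + \sum_{k=2}^{n+1} e_k\, t^{\,n+1-k} .
$$

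Comparing the two, observe that $t^n \bar d = t^{n+1} + \big(\sum_{i=0}^n c_i\big) t^n$ is exactly the sum of the first two terms of the expansion, while the remaining terms $\sum_{k=2}^{n+1} e_k\, t^{\,n+1-k}$ are nonnegative, since each $e_k$ is a sum of products of the nonnegative numbers $c_i$ and $t > 0$. Hence $\prod_{i=0}^n \bar a_i \geq t^n \bar d$, and multiplying by $n^n$ yields $n^n \prod_{i=0}^n \bar a_i \geq (nt)^n \bar d = I^n \bar d$, which is precisely the negation of the Bishop inequality. This contradiction completes the argument.

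I do not expect a genuine obstacle. Once the substitution $\bar a_i = t + c_i$ is in place, the statement collapses to the positivity of the elementary symmetric polynomials, which is immediate. The only point needing care is that it is the hypothesis $I \leq n\bar a_0$ on the minimum $\bar a_0$ (rather than a bound involving the average) that forces all $c_i \geq 0$, via the ordering $\bar a_0 \leq \dots \leq \bar a_n$; this is exactly where the role of $\bar a_0$ as the smallest weight is used, and it is essential. It is also worth noting that equality throughout forces $e_2 = \dots = e_{n+1} = 0$, i.e.\ at most one $c_i$ nonzero, which isolates the boundary case.
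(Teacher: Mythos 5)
Your proof is correct and takes a genuinely different, and markedly shorter, route than the paper's. Both arguments prove the contrapositive, but the paper normalizes by $\bar a_0$, writes $\bar I=\alpha n\bar a_0$ with $0<\alpha\leqslant 1$, shows by a monotonicity-in-$\alpha$ (derivative) argument that the extremal case $\alpha=1$ suffices, and then proves the resulting inequality $\sum_{i=1}^{n}a_i+1-n-\prod_{i=1}^{n}a_i\leqslant 0$ (for $a_i\geqslant 1$) by induction on $n$ combined with an optimization argument: the case where some $a_i\geqslant n$ is handled separately, and otherwise the maximum of $F(x_1,\ldots,x_n)=\sum x_i+1-n-\prod x_i$ on the box $(1,n)^n$ is located by the first derivative test (forcing all coordinates equal) and shown to be nonpositive. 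Your substitution $\bar a_i=t+c_i$ with $t=I/n\leqslant\bar a_0$ collapses all of this into a single algebraic identity: $t^n\bar d=t^{n+1}+\big(\sum_i c_i\big)t^n$ is exactly the sum of the two leading terms of $\prod_i(t+c_i)$, and the remaining elementary symmetric terms are nonnegative, whence $n^n\prod_i\bar a_i\geqslant (nt)^n\bar d=I^n\bar d$, contradicting Bishop. Because this works uniformly for every $t\in(0,\bar a_0]$, you need neither the reduction to the extremal case, nor induction, nor any calculus; in fact the paper's reduced inequality is precisely the special case $t=1$ of your expansion, namely $\prod_i(1+c_i)\geqslant 1+\sum_i c_i$, so your argument subsumes and simplifies the paper's. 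As a bonus, your expansion also identifies the equality case (at most one $c_i$ nonzero), which the paper's proof does not record.
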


It is well-known that $I\leqslant n=\mathrm{dim}(X)+1$ if $X$ is
smooth. On the other hand, we know that
$$
dI^{n}>n^n\prod_{i=0}^{n}a_{i}\iff I\Big(-K_{X}\Big)^{n-1}>\Big(\mathrm{dim}\big(X\big)+1\Big)^{n}.%
$$

\begin{remark}
\label{remark:Bishop-Lichnerowicz} Let $U$ be a~smooth Fano
variety of dimension $m$. Define the Fano index $\gimel$ of $U$ to
be the maximal integer such that $-K_{U}\sim\gimel H$ for some
$H\in\mathrm{Pic}(U)$. Then the~inequality
$$
\gimel\Big(-K_{U}\Big)^{m}\leqslant\Big(\mathrm{dim}\big(U\big)+1\Big)^{m+1}
$$
fails in general if $m\gg 1$ (see \cite[Proposition~5.22]{Deb01}).
But we always have $\gimel\leqslant m+1$.
\end{remark}

Suppose that $n=3$. Then $X$ is a~del Pezzo surface with at most
quotient singularities, which is an interesting object of study,
in particular from the point of
the question of existence
of orbifold K\"ahler--Einstein metrics and Sasakian--Einstein
structures (see e.\,g.~\cite{JoKo01b},
\cite{Ara02}, \cite{BoGaNa02}, \cite{BoGaNa03}) and some others
(see e.\,g.~\cite{Elagin}).
The classification of
such surfaces $X$ with $I=1$ is known due to~\cite{JoKo01b}.

\begin{theorem}[{\cite[Theorem~8]{JoKo01b}}]
\label{theorem:Kollar-Johnson} Suppose that $I=1$. Then
\begin{itemize}
\item either $(a_{0},a_{1},a_{2},a_{3},d)=(2,2m+1,2m+1,4m+1,8m+4)$,
where $m$ is a positive integer,%
\item or the~quintuple $(a_{0},a_{1},a_{2},a_{3},d)$ lies in
the~sporadic set
$$
\left\{\aligned
&(1,1,1,1,3), (1,1,1,2,4), (1,1,2,3,6), (1,2,3,5,10),  \\%
&(1,3,5,7,15), (1,3,5,8,16), (2,3,5,9,18), (3,3,5,5,15), (3,5,7,11,25), \\%
&(3,5,7,14, 28), (3,5,11,18, 36), (5,14,17,21,56),(5,19,27,31,81),\\
&(5,19,27,50,100), (7,11,27,37,81), (7,11,27,44,88), (9,15,17,20,60),   \\
& (9,15,23,23,69), (11,29,39,49,127), (11,49,69,128,256), \\
& (13,23,35,57,127), (13,35,81,128,256) \\
\endaligned\right\}.
$$
\end{itemize}
\end{theorem}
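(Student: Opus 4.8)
The plan is to reduce the classification to a finite Diophantine problem by converting quasismoothness and well-formedness into explicit numerical conditions on the weights, and then to isolate the single unbounded family. Throughout I write $d=a_0+a_1+a_2+a_3-1$, since $I=1$, and I keep the ordering $a_0\leqslant a_1\leqslant a_2\leqslant a_3$; recall that here $n=3$, so $X$ is a del Pezzo surface in $\mathbb{P}(a_0,a_1,a_2,a_3)$.

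First I would record the combinatorial consequences of Definition~\ref{definition:quasismooth} using the standard quasismoothness criteria for weighted hypersurfaces (Reid, Iano--Fletcher). At each coordinate vertex $P_i$ one needs either a pure-power monomial $x_i^{k_i}$ of degree $d$, forcing $a_i\mid d$, or a linking monomial $x_i^{k_i}x_j$ of degree $d$ with $j\ne i$, forcing $a_i\mid d-a_j$; along each coordinate edge $P_iP_j$ and each coordinate face there are analogous codimension-two and codimension-three conditions. Recording for each $i$ which alternative holds defines a ``monomial configuration'' \t a decoration of the index set $\{0,1,2,3\}$ by pure-power vertices and by links $i\mapsto j$. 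Since there are only four variables, there are only finitely many such configurations up to the evident symmetry.

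Next, for each fixed configuration I would substitute $d=\sum_{k}a_k-1$ into the defining relations $a_ik_i=d$ or $a_ik_i+a_j=d$, turning them into an affine-linear system in $(a_0,a_1,a_2,a_3)$ with the exponents $k_i$ as auxiliary positive-integer unknowns. The key observation is that a pure-power relation $a_i\mid d$ reads $a_i\mid\sum_{k\ne i}a_k-1$, and together with the ordering this confines $k_i=d/a_i$ to a bounded range; for all but one configuration the system is then rigid enough that, after imposing $a_0\leqslant\cdots\leqslant a_3$ and the well-formedness $\gcd$-conditions of Definition~\ref{definition:well-formed}, all weights are bounded. A finite (in practice computer-assisted) enumeration produces the candidate quintuples, which after deleting non-well-formed and repeated entries yield the sporadic list. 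The exceptional configuration consists of pure powers at $x_0,x_1,x_2$ (so $a_0,a_1,a_2\mid d$) together with the link $x_3^{2}x_0$, where the relations collapse to $a_1+a_2=a_3+1$ with one residual degree of freedom; solving these with the $\gcd$-conditions gives exactly $(a_0,a_1,a_2,a_3,d)=(2,2m+1,2m+1,4m+1,8m+4)$.

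The main obstacle is proving that this is the \emph{only} unbounded configuration and that no spurious families survive. The vertex conditions alone underdetermine the weights, so the argument must genuinely use the codimension-two and codimension-three quasismoothness conditions along the coordinate lines and faces to eliminate the remaining link patterns, and well-formedness must be invoked to discard weighted hypersurfaces that are disguised copies of lower-dimensional or lower-degree ones. Once the unbounded family is pinned down and the complementary weight bound is established, the remainder is the finite verification described above.
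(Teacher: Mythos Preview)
The paper does not give its own proof of this statement: Theorem~\ref{theorem:Kollar-Johnson} is quoted verbatim from \cite{JoKo01b}, so there is no in-paper argument to compare against. Your outline is essentially the original Johnson--Koll\'ar / Iano-Fletcher strategy: record the vertex, edge and face quasismoothness constraints as divisibility/link relations, enumerate the finitely many monomial configurations, and for each one solve the resulting affine-linear system in the weights subject to well-formedness.

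It is worth noting that the paper's own machinery provides a genuinely different route to the same classification. Rather than working with the quasismoothness configurations directly, Section~\ref{section:examples} passes through the Yau--Yu normal-form list for three-dimensional isolated rational hypersurface singularities with $\mathbb{C}^{*}$-action: every quasismooth $X$ arises from one of nineteen polynomial types, and the paper (Theorem~\ref{theorem:technical}, Tables~\ref{table:1} and~\ref{table:2}) solves the weight equations type by type. Specialising that output to $I=1$ recovers exactly the Johnson--Koll\'ar list; for instance the infinite family appears in Table~\ref{table:1} as $(2,2n+1,2n+1,4n+1,8n+4)$, sourced from case II.3(4) (and also from V.3(4), worked in Example~\ref{example:8}). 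Your approach has the advantage of being self-contained, while the Yau--Yu route trades the combinatorics of edge/face conditions for a pre-packaged case list.

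As written, your proposal is an honest \emph{plan} rather than a proof. Two points deserve more than an acknowledgement. First, the claim that the configuration ``pure powers at $x_0,x_1,x_2$ plus link $x_3^{2}x_0$'' is the \emph{only} one producing an unbounded family is the crux and is not obvious from vertex data alone; several link patterns (e.g.\ with two links) leave a free parameter after imposing $d=\sum a_i-1$, and one really must invoke the edge conditions and well-formedness to kill them. Second, even within your exceptional configuration, the relation $a_3=a_1+a_2-1$ still leaves a two-parameter family before the divisibility constraints $a_0,a_1,a_2\mid d$ and the $\gcd$ conditions are imposed; you should indicate concretely how these force $a_0=2$, $a_1=a_2$ odd, rather than asserting the answer.
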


Note that we can not apply Theorem~\ref{theorem:KE} to the~surface $X$ if
$I\geqslant 3a_{0}/2$, because $\mathrm{lct}(X)\leqslant a_{0}/I$.

The authors of~\cite{BoGaNa03} went further to classify the cases
with $2\leqslant I\leqslant 10$
and suggest that $I$ cannot attain larger values.

\begin{theorem}[{cf. \cite[Theorem~4.5]{BoGaNa03}}]
\label{theorem:BGN} Suppose that $2\leqslant I\leqslant 10$ and
$I<3a_{0}/2$. Then
\begin{itemize}
\item either
there exist a~non-negative integer $k<I$ and
a~positive integer $a\geqslant I+k$ such that
$$
\big(a_{0},a_{1},a_{2},a_{3},d\big)=\big(I-k,I+k,a,a+k,2a+k+I\big),
$$

\item or the~quintuple $(a_{0},a_{1},a_{2},a_{3},d)$ belongs to
one of the~following infinite series:
\begin{itemize}
\item $(3,3m,3m+1,3m+1, 9m+3)$,

\item $(3, 3m+1, 3m+2, 3m+2, 9m+6)$,%

\item $(3, 3m+1, 3m+2, 6m+1, 12m+5)$,

\item $(3,3m+1, 6m+1, 9m, 18m+3)$,%

\item $(3, 3m+1, 6m+1, 9m+3, 18m+6)$,%

\item $(4, 2m+3, 2m+3, 4m+4, 8m+12)$,%

\item $(4, 2m+3, 4m+6, 6m+7, 12m+18)$,%

\item $(6, 6m+3, 6m+5, 6m+5, 18m+15)$,%

\item $(6, 6m+5, 12m+8, 18m+9, 36m+24)$,%

\item $(6, 6m+5, 12m+8, 18m+15, 36m+30)$,%

\item $(8, 4m+5, 4m+7, 4m+9, 12m+23)$,%

\item $(9, 3m+8, 3m+11, 6m+13, 12m+35)$,%
\end{itemize}
where $m$ is a~positive integer,%

\item or the~quintuple $(a_{0},a_{1},a_{2},a_{3},d)$ lies in
the~sporadic set\footnote{We group these quintuples according to the value
of $I$, and in each group the quintuples are ordered lexicographically.}
$$
\left\{\aligned
&(2,3,4,7,14), (3,4,5,10,20), (3,4,6,7,18),
(3,4,10,15,30), (5,13,19,22,57),\\
&(5,13,19,35,70), (6,9,10,13,36), (7,8,19,25,57), (7,8,19,32,64),\\
&(9,12,13,16,48), (9,12,19,19,57), (9,19,24,31,81), (10,19,35,43,105),\\
&(11,21,28,47,105), (11,25,32,41,107), (11,25,34,43,111), (11,43,61,113,226),\\
&(13,18,45,61,135), (13,20,29,47,107), (13,20,31,49,111), (13,31,71,113,226),\\
&(14,17,29,41,99), (5,7,11,13,33), (5,7,11,20,40), (11,21,29,37,95),\\
&(11,37,53,98,196), (13,17,27,41,95), (13,27,61,98,196), (15,19,43,74,148),\\
&(9,11,12,17,45), (10,13,25,31,75), (11,17,20,27,71), (11,17,24,31,79), \\
&(11,31,45,83,166), (13,14,19,29,71), (13,14,23,33,79), (13,23,51,83,166), \\
&(11,13,19,25,63),(11,25,37,68,136), (13,19,41,68,136),(11,19,29,53,106),\\
&(13,15,31,53,106), (11,13,21,38,76), (3,7,8,13,29), (3,10,11,19,41),\\
&  (5,6,8,9,24), (5,6,8,15,30), (2,3,4,5,12), (7,10,15,19,45), \\
&  (7,18,27,37,81),(7,15,19,32,64), (7,19,25,41,82), (7,26,39,55,117). \\
\endaligned\right\}.
$$
\end{itemize}
\end{theorem}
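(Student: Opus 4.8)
The plan is to treat the statement as a finite combinatorial classification and to carry out the enumeration of admissible quintuples directly. First I would rewrite the defining data in terms of the index: since $I=a_0+a_1+a_2+a_3-d$ we have $d=a_0+a_1+a_2+a_3-I$, so a quintuple is determined by $(a_0,a_1,a_2,a_3,I)$ alone. The hypothesis $I<3a_0/2$ together with $I\geq 2$ immediately forces $a_0\geq 2$ (otherwise $I<3/2$ would give $I=1$); more precisely it confines $I$ to the range $2\leq I\leq\min\{10,\lceil 3a_0/2\rceil-1\}$ for each fixed $a_0$, and symmetrically bounds $a_0$ from below by $2I/3$ once $I$ is fixed. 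This is the elementary numerical skeleton on which everything else is hung.

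Next I would impose quasismoothness. By the classical combinatorial description of quasismooth well-formed hypersurfaces (Iano--Fletcher, following Reid), for each variable $x_i$ the degree-$d$ part of $\phi$ must contain either a pure power $x_i^{d/a_i}$, forcing $a_i\mid d$, or a monomial $x_i^{m_i}x_j$ with $j\neq i$, forcing $a_i\mid(d-a_j)$; in addition, along every coordinate edge $\{x_k=x_l=0\}$ the restriction of $\phi$ must carry a suitable monomial so that the singularity $(V,O)$ of Definition~\ref{definition:quasismooth} stays isolated. Recording, for each of the four weights, which alternative occurs produces a finite list of \emph{linking types}, i.e.\ combinatorial patterns of divisibility relations among $a_0,a_1,a_2,a_3,d$. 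Well-formedness, namely the gcd conditions of Definition~\ref{definition:well-formed}, then further constrains which types can be realised.

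The heart of the argument is an a priori bound reducing the problem to finitely many types, each carrying at most one free integer parameter. Here the inequality $I<3a_0/2$ is essential: combined with the divisibility relations from the previous step, it bounds the ratios $a_j/a_0$ outside of one controlled family of solutions. That family is exactly the $\BGN$-type locus of Definition~\ref{definition:BGN-type}, whose quintuple $(I-k,I+k,a,a+k,2a+k+I)$ contains the free weight $a$ and hence yields solutions of arbitrarily large weight; these escape any a priori bound, which is precisely why they must be removed by hypothesis. Once the $\BGN$-type quintuples are set aside, each remaining linking type gives a linear Diophantine system in $a_0,a_1,a_2,a_3$ whose solution set is either a one-parameter arithmetic progression, producing one of the twelve infinite series with parameter $m$, or a finite set, contributing to the sporadic list.

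Finally I would run this enumeration over all $I\in\{2,\dots,10\}$ and all admissible linking types, check well-formedness and the inequality $I<3a_0/2$ in each case, discard the $\BGN$-type solutions, and collect the survivors. I expect the main obstacle to be neither any single conceptual step nor the bounding lemma, but the completeness and bookkeeping of the case analysis: one must verify that the twelve series and the sporadic set together exhaust every admissible type without omission or duplication, and that each tabulated quintuple does admit a quasismooth well-formed representative. This part is essentially a (computer-assisted) search, and the delicate point is confirming that the a priori bound is sharp enough for the search to terminate once the $\BGN$-type families have been excluded.
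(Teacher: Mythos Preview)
Your plan follows the direct Iano--Fletcher route and is close in spirit to the original argument in \cite{BoGaNa03}, from which the paper simply \emph{quotes} Theorem~\ref{theorem:BGN}. The paper itself does not reprove the statement this way; rather it deduces it from the stronger Theorem~\ref{theorem:Cheltsov-Shramov-BGN} (same conclusion, no bound $I\leqslant 10$), which in turn follows from Theorem~\ref{theorem:technical}. The proof of the latter uses a genuinely different decomposition: instead of enumerating linking types and divisibility patterns among the weights, the paper invokes the Yau--Yu classification \cite{YauYu03} of three-dimensional isolated rational quasihomogeneous hypersurface singularities. This writes $\phi=\xi+\chi$ with $\xi$ in one of nineteen explicit normal forms I--XIX; for each form the admissible exponent tuples $(\alpha,\beta,\gamma,\delta,\ldots)$ are already tabulated in \cite{YauYu03}, and the paper then solves for the weights from the exponents and imposes well-formedness case by case (Examples~\ref{example:1}--\ref{example:11} show representative computations, Tables~\ref{table:1} and \ref{table:2} record the output).

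What each approach buys: the Yau--Yu route supplies a finite list of polynomial types independent of $I$, so the hypothesis $I\leqslant 10$ is never used and Theorem~\ref{theorem:Cheltsov-Shramov-BGN} comes for free; the price is total reliance on an external classification, and indeed the paper has to patch several misprints and omissions in \cite{YauYu03}. Your route is self-contained modulo the standard quasismoothness criteria, but the step you call ``the heart of the argument''---an a priori bound on the ratios $a_j/a_0$ once $\BGN$-type is excluded---is not correct as stated: the twelve infinite series in the conclusion already have unbounded weight ratios and are by hypothesis \emph{not} $\BGN$-type, so no such bound can hold. What actually makes your search terminate is that the linking-type equations, together with $I\leqslant 10$ and well-formedness, leave at most one free integer parameter per type; justifying this carefully (rather than a ratio bound) is the real content, and is precisely the bookkeeping burden you anticipate at the end.
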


Note that Theorem~~\ref{theorem:BGN}  differs from
\cite[Theorem~4.5]{BoGaNa03} in the~following way.
\begin{itemize}
\item The~series $(3, 3m+1, 3m+2, 6m+1, 12m+5)$
is omitted in \cite[Theorem~4.5]{BoGaNa03}.%

\item We have removed the~quintuple $(5,7,8,9,23)$ from the~list
of sporadic cases since
$(5,7,8,9,23)=(I-k,I+k,a,a+k,2a+k+I)$ for $I=6$, $k=1$ and $a=8$.%

\item The infinite series in
\cite[Theorem~4.5]{BoGaNa03} corresponding to our series
$(4, 2m+3, 4m+6, 6m+7, 12m+18)$ starts from $m=0$;
we have shifted it and extracted
the sporadic case
$(3,4,6,7,18)$ corresponding to $m=0$.

\item
The infinite series in
\cite[Theorem~4.5]{BoGaNa03} corresponding to our series
$(8, 4m+5, 4m+7, 4m+9, 12m+23)$ in
\cite[Theorem~4.5]{BoGaNa03} starts with $m=0$;
we have shifted it and extracted
the sporadic case
$(5,7,8,9,23)$ corresponding to $m=0$.

\item The infinite series in
\cite[Theorem~4.5]{BoGaNa03} corresponding to our series
$(9, 3m+8, 3m+11, 6m+13, 12m+35)$ starts with $m=-1$; we have shifted it and extracted
the sporadic case $(8,9,11,13,35)$ corresponding to $m=0$ (note
that the quintuple $(5,7,8,9,23)$ corresponding to $m=-1$ has already appeared
from the previous series).%
\end{itemize}

\begin{remark}
\label{remark:BGN-small} Arguing as in the~proof of
\cite[Lemma~5.2]{BoGaNa03}, one~can~show~that
$$
\mathrm{lct}\big(X\big)\geqslant 2/3\iff\big(a_{0},a_{1},a_{2},a_{3},d\big)\in\Big\{ \big(1,1,1,1,3\big), \big(1,1,2,3,6\big)\Big\}%
$$
in the case when
$(a_{0},a_{1},a_{2},a_{3},d)=(I-k,I+k,a,a+k,2a+k+I)$ for some
non-negative integer $k<I$ and some positive integer $a\geqslant
I+k$ (cf. \cite[Theorem~1.7]{Ch07b}). These two cases are exactly
ones when $X$ is smooth.
\end{remark}

The main purpose of this paper is to prove a technical result (see
Theorem~\ref{theorem:technical} in
Section~\ref{section:examples}), which we derive from the classification
of isolated quasi-homogeneous rational three-dimensional
hypersurface
singularities obtained in~\cite{YauYu03}. Being not very attractive on its own,
Theorem~\ref{theorem:technical} easily implies
the~following.

\begin{theorem}
\label{theorem:Cheltsov-Shramov-BGN} The assertion of
Theorem~\ref{theorem:BGN} holds without the~assumption $I\leqslant
10$.
\end{theorem}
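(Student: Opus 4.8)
The plan is to reduce the claim to a bounded-index statement and then eliminate all large indices by a quasismoothness analysis. Every quintuple listed in Theorem~\ref{theorem:BGN} has index $I\leqslant 10$, so the asserted list is already complete once we know that no admissible surface has $I\geqslant 11$. Thus it suffices to prove that there is no quasismooth well-formed del Pezzo hypersurface $X\subset\P(a_0,a_1,a_2,a_3)$ with $a_0\leqslant a_1\leqslant a_2\leqslant a_3$ which fails to be a~$\BGN$-type surface and satisfies both $I\geqslant 11$ and $I<3a_0/2$; combining this with Theorem~\ref{theorem:BGN} for $2\leqslant I\leqslant 10$ then yields the result. First I would extract the elementary numerical consequences of the hypotheses. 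The inequality $I<3a_0/2$ reads $a_0>2I/3$, so $I\geqslant 11$ forces $a_0\geqslant 8$ and hence $a_i\geqslant 8$ for all $i$; moreover $d=a_0+a_1+a_2+a_3-I$ gives $d-a_3=a_0+a_1+a_2-I>2I-I=I>0$, so that $a_3<d$, and likewise $a_0+a_1-I>I/3>0$, hence $a_2+a_3<d$.

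The engine of the argument is the Iano--Fletcher description of quasismoothness of a general member $X_d$, which I would use through two families of monomial conditions. The \emph{vertex conditions}: for each $i$ the polynomial $\phi$ must contain a monomial $x_i^{n_i}$ (so $a_i\mid d$) or a monomial $x_i^{n_i}x_j$ of degree $d$ for some $j\neq i$. The \emph{edge conditions}: for each pair $\{i,j\}$ either there is a monomial supported on $\{x_i,x_j\}$ of degree $d$, or there are two monomials $x_i^p x_j^q x_k$ and $x_i^{p'}x_j^{q'}x_\ell$ of degree $d$ with $\{k,\ell\}=\{0,1,2,3\}\setminus\{i,j\}$. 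These are supplemented by well-formedness, namely $\gcd(a_i,a_j)\mid d$ for all $i\neq j$ and $\gcd$ of any three of the weights equal to $1$, which pins down the relevant divisibilities.

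The core step is to analyse the coordinate edges carrying the two largest weights. Since $a_2+a_3<d$, a monomial supported on the edge $\{2,3\}$ of degree $d$ must be either a pure power $x_2^{\,d/a_2}$ or $x_3^{\,d/a_3}$ (so $a_2\mid d$ or $a_3\mid d$) or a genuinely mixed monomial $x_2^p x_3^q$ with $p,q\geqslant 1$; the mixed case forces $a_0\geqslant I$ and a rigid relation such as $a_2=a_0+a_1-I$. When none of these occur, quasismoothness along $\{2,3\}$ requires the bridging pair $x_2^p x_3^q x_0$ and $x_2^{p'}x_3^{q'}x_1$, giving $p a_2+q a_3=d-a_0$ and $p'a_2+q'a_3=d-a_1$; subtracting these and feeding in the analogous relations for the remaining edges and for the vertex monomials produces a linear system in $(a_0,a_1,a_2,a_3,d)$. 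I would then show that, under $a_0>2I/3$, every solution either has the normal form $(I-k,I+k,a,a+k,2a+k+I)$ of Definition~\ref{definition:BGN-type}, and is therefore excluded, or else forces $I\leqslant 10$. The case $a_0\geqslant I$ is handled by the same bookkeeping applied to the rigid relations, and the pure-power vertex cases $a_i\mid d$ are disposed of by bounding the exponents $n_i$ through $a_3<d$ and $a_i>2I/3$.

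I expect the main obstacle to be organising this case analysis so that it is uniform in $I$. Note that the Bishop and Lichnerowicz inequalities of Theorem~\ref{theorem:Bishop-versus-Lichnerowicz} are by themselves insufficient: in the range $I<3a_0/2$ one only gets $dI^3\leqslant 27a_0a_1a_2a_3$, which holds with room to spare for all $I$, so the emptiness for $I\geqslant 11$ must be squeezed entirely out of the monomial and $\gcd$ constraints. The delicate point is controlling the long chains that arise when a small weight carries a high-multiplicity vertex monomial $x_i^{n_i}x_j$: here one must rule out unbounded families of linkages that fail to collapse to the $\BGN$-type normal form, and it is precisely this bookkeeping, rather than any single inequality, that prevents new series from appearing beyond those of Theorem~\ref{theorem:BGN}.
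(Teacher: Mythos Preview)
Your plan differs substantially from the paper's. The paper does not try to bound $I$ directly or to force the $\BGN$ normal form out of the Iano--Fletcher conditions. Instead it deduces Theorem~\ref{theorem:Cheltsov-Shramov-BGN} from an unconditional classification, Theorem~\ref{theorem:technical}, obtained via the Yau--Yu list of three-dimensional isolated rational quasihomogeneous hypersurface singularities: one writes $\phi=\xi+\chi$ where $\xi$ belongs to one of nineteen explicit polynomial templates (types I--XIX in Section~\ref{section:examples}), and for each template the weight equations together with well-formedness are solved for $(a_0,a_1,a_2,a_3,d)$, as in Examples~\ref{example:1}--\ref{example:11}. The outcome is that every non-degenerate $X$ is of $\ab$-type, $\tv$-type, $\BGN$-type, or appears in Tables~\ref{table:1}--\ref{table:2}. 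Theorem~\ref{theorem:Cheltsov-Shramov-BGN} then follows by inspection: the inequality $I<3a_0/2$ already excludes $\ab$-type and $\tv$-type, and every table entry satisfying $I<3a_0/2$ has $I\leqslant 10$ and matches the list in Theorem~\ref{theorem:BGN}.

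Your direct approach via vertex and edge monomials is not wrong in spirit, and the target statement (no non-$\BGN$ surfaces with $I\geqslant 11$ and $I<3a_0/2$) is indeed what the paper's classification confirms, but the proposal has a genuine gap. The sentence ``I would then show that \dots\ every solution either has the normal form \dots\ or else forces $I\leqslant 10$'' is the entire content of the theorem, and you give no mechanism for closing the case analysis. The difficulty you yourself flag at the end --- long chains of linkages from high-multiplicity vertex monomials --- is exactly what the Yau--Yu classification absorbs: it replaces an a priori unbounded enumeration of monomial configurations by nineteen templates, each carrying at most two free exponent parameters, so that the case analysis is finite from the start. Without that external input, or an equivalent structural lemma, your edge-by-edge bookkeeping has no visible termination, and nothing in the outline explains how to handle, for instance, cyclic chains $x_0^{n_0}x_1$, $x_1^{n_1}x_2$, $x_2^{n_2}x_3$, $x_3^{n_3}x_0$ uniformly in the exponents.
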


Therefore, we obtain a proof of the (corrected version of the)
half-experimental result of~\cite{BoGaNa03} (i.\,e. Theorem~\ref{theorem:BGN})
modulo~\cite{YauYu03}.

As the second application of
Theorem~\ref{theorem:technical} we derive from it a
classificational result in the style of~\cite{JoKo01b} which
is more explicit than the corresponding result of~\cite{BoGaNa03}.
Namely, we list the cases with $I=2$. Note that obtaining the list of
the cases with any bounded index requires just  a bit  of elementary
computation modulo Theorem~\ref{theorem:technical}.

\begin{corollary}
\label{theorem:Cheltsov-Shramov-I-2} Suppose that $I=2$. Then
\begin{itemize}

\item either $(a_{0},a_{1},a_{2},a_{3},d)=(1,1,s,r,s+r)$, where
$s\le r$ are positive integers,

\item or the~quintuple $(a_{0},a_{1},a_{2},a_{3},d)$ belongs to
one of the~following infinite series:
\begin{itemize}
\item $(1, 2, m+1, m+2, 2m+4)$,%
\item $(1, 3, 3m, 3m+1, 6m+3)$,%
\item $(1, 3, 3m+1, 3m+2, 6m+5)$,%
\item $(3, 3m, 3m+1, 3m+1, 9m+3)$,%
\item $(3, 3m+1, 3m+2, 3m+2, 9m+6)$,%
\item $(3, 3m+4, 3m+5, 6m+7, 12m+17)$,
\item $(3, 3m+1, 6m+1, 9m, 18m+3)$,%
\item $(3, 3m+1, 6m+1, 9m+3, 18m+6)$,%
\item $(4, 2m+3, 2m+3, 4m+4, 8m+12)$,%
\item $(4, 2m+3, 4m+6, 6m+7, 12m+18)$,%
\end{itemize}
where $m$ is a~positive integer,%

\item or the~quintuple $(a_{0},a_{1},a_{2},a_{3},d)$ lies in
the~sporadic set
$$
\left\{\aligned
&(1,1,2,2,4),(1,4,5,7,15),(1,4,5,8,16),(1,5,7,11,22),(1,6,9,13,27),\\%
(1,7,12,18,36),
&(1,8,13,20,40),(1,9,15,22,45),(1,3,4,6,12),(1,4,6,9,18),\\%
(1,6,10,15,30),
&(2,3,4,5,12), (2,3,4,7,14), (3,4,5,10,20),(3,4,6,7,18), (3,4,10,15,30),\\%
(3,4,6,7,18),
&  (5,13,19,22,57),(5,13,19,35,70), (6,9,10,13,36), (7,8,19,25,57),\\
& (7,8,19,32,64),(9,12,13,16,48), (9,12,19,19,57), (9,19,24,31,81),\\
& (10,19,35,43,105),(11,21,28,47,105), (11,25,32,41,107), (11,25,34,43,111),\\
& (11,43,61,113,226),(13,18,45,61,135), (13,20,29,47,107),\\
& (13,20,31,49,111), (13,31,71,113,226),(14,17,29,41,99)\\
\endaligned\right\}.
$$
\end{itemize}
\end{corollary}

As was already mentioned above, an interesting question about a surface
$X$ is whether $X$ admits an orbifold K\"ahler--Einstein metric or not.
Some obstructions are provided by Theorem~\ref{theorem:Bishop-Lichnerowicz},
and the main instrument to prove the existence is the sufficient
condition given by Theorem~\ref{theorem:KE}. Most of the examples
mentioned in Theorems~\ref{theorem:Kollar-Johnson} and~\ref{theorem:BGN}
have already been studied from this point of view.
As for the series omitted in~\cite{BoGaNa03}, we have the following.

\begin{theorem}
\label{theorem:Cheltsov-Shramov-new-series} Suppose that
$$
\big(a_{0},a_{1},a_{2},a_{3},d\big)=\big(3,3m+1,3m+2,6m+1,12m+5\big),%
$$
where $m\in\mathbb{Z}_{>0}$. Then $\mathrm{lct}(X)=1$.
\end{theorem}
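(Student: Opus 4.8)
The plan is to establish $\lct(X)\le 1$ and $\lct(X)\ge 1$ separately, after recording the local geometry. Since $I=2$ we have $-K_X\equiv\O_X(2)$, and $(-K_X)^2=\frac{4(12m+5)}{3(3m+1)(3m+2)(6m+1)}<1$ for every $m$. A check of the monomials forcing quasismoothness at the coordinate vertices shows that $X$ is singular exactly at $P_0,P_1,P_2,P_3$, of respective types $\frac13(1,1)$, $\frac1{3m+1}(3,-1)$, $\frac1{3m+2}(3,-1)$ and, crucially, $\frac1{6m+1}(1,3)$ at $P_3=(0:0:0:1)$.

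For the upper bound I would use that the only degree-$d$ monomials not involving $x_0$ are $x_1^3x_2$ and $x_2^2x_3$; this follows by writing the degree equation as $3m(a+b+2c)+(a+2b+c)=12m+5$, and quasismoothness at $P_1$ and $P_2$ forces both coefficients to be nonzero. Hence $\{x_0=0\}$ splits as $L+R$, where $L=\{x_0=x_2=0\}$ and $R=\{x_0=0,\ \alpha x_1^3+\beta x_2x_3=0\}$, with $L\cap R=\{P_3\}$. Take $D=\frac23(L+R)\equiv -K_X$. On the index-one cover of the $\frac1{6m+1}(1,3)$ point $P_3$, with weighted coordinates $(u,v)$ of weights $(1,3)$, the two branches become $\{v=0\}$ and $\{v=cu^3\}$, tangent to order $3$; three successive blowups produce a divisor $E_3$ with $a(E_3;\C^2)=3$ and $\mathrm{ord}_{E_3}(q^{*}D)=\frac23\cdot 3+\frac23\cdot 3=4$, so $a(E_3;\C^2,q^{*}D)=-1$. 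Since the log canonical threshold of a $\mu_r$-invariant pair is computed on the index-one cover, $\lct_{P_3}(X,D)=1$, while every other point of $\Supp D$ contributes $\frac32$; therefore $\lct(X)\le\lct(X,D)=1$.

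For the reverse inequality, suppose $(X,\lambda D)$ is not log canonical for some effective $D\equiv -K_X$ and some $\lambda<1$, and let $Z$ be a minimal center. I would first dispose of a general point: the pencil $\ang{x_3,\,x_0^mx_1}\subset|\O_X(6m+1)|$ has finite base locus, and for a general member $\Gamma$ through a general smooth point $P$ one gets $\mult_P D\le D\cdot\Gamma=(-K_X)\cdot\Gamma=\frac{2(12m+5)}{3(3m+1)(3m+2)}<1$, excluding non-log-canonicity there. This confines $Z$ to the finitely many curves $\{x_i=0\}$ (equivalently to $L$, $R$ and their counterparts) or to one of the four singular points.

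The remaining local analysis is the crux. If $Z$ is a curve I would write $D=aZ+\Omega$ with $a>1$, restrict to $Z$ (each a weighted $\P^1$) and combine adjunction with the small numbers $(-K_X)\cdot L=\frac2{(3m+1)(6m+1)}$, $(-K_X)\cdot R$, etc., to bound $a$ and contradict $a>1$. If $Z$ is a singular point I would bound $\mathrm{ord}_E(D)$ along the exceptional divisors of the minimal resolution by intersecting $D$ with the curves through that point. The decisive case is $P_3$: since the upper bound is attained there, the estimate must be sharp, i.e. one must prove that no $D\equiv -K_X$ is more singular than $\frac23(L+R)$ along the valuation $E_3$ above. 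Reconciling the order-$3$ tangency of $L$ and $R$ at $P_3$ with the bounds furnished by $(-K_X)\cdot L$ and $(-K_X)\cdot R$ is where I expect the real difficulty to lie; the vertices $P_0,P_1,P_2$, being of type $\frac1r(1,1)$ or otherwise milder, should be handled comfortably by the same multiplicity estimates.
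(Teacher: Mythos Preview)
Your upper bound is correct and cleanly argued: the splitting $C_x=L+R$, the identification of the local picture at $O_t$ as two smooth branches with contact order~$3$ on the index-one cover, and the resulting equality $\lct_{O_t}(X,\tfrac23(L+R))=1$ are all right.

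The lower bound, however, is only an outline, and your diagnosis of where the difficulty lies is off. You expect the crux to be at $P_3=O_t$ and propose to attack it valuation-theoretically, bounding $\mathrm{ord}_{E_3}(D)$. In the paper this point is in fact the \emph{easiest} singular case: one writes $D=\mu L+\Omega$ with $L\not\subset\Supp(\Omega)$, uses $D\cdot R\ge \mu\,L\!\cdot\!R+\tfrac{1}{6m+1}\mult_{O_t}(\Omega)$ together with $\mult_{O_t}(\Omega)>1-\mu$ to bound $\mu$ from above, and then uses the adjunction-type inequality $\Omega\cdot L>\tfrac1{6m+1}$ to bound $\mu$ from below; for $m\ge 2$ the first step alone already gives a contradiction, and for $m=1$ the two bounds are incompatible. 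No discrepancy computation along $E_3$ is needed, and it is not clear your valuation approach would close the gap without reverting to exactly these intersection numbers.

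What actually requires work is the part you pass over quickly. First, your pencil $\langle t,\,x^m y\rangle$ does not cover $C_x\setminus\{O_y,O_z\}$ at all (if $x(P)=0$ and $t(P)\neq 0$ there is no member through $P$), so you still owe a separate argument for every point of $C_x$, smooth or not; the paper does this by a careful two-sided estimate with the pair $(L,R)$ analogous to the one at $O_t$. Second, for the points your pencil \emph{does} cover, the member $\Gamma_\mu$ through a given $P$ may be reducible (this happens for up to two values of $\mu$, namely the roots of $\mu^2+\epsilon_2\mu+\epsilon_3$), and then the bound $\mult_P D\le D\cdot\Gamma_\mu$ is not available; the paper treats this explicitly for $m=1$ by decomposing the reducible member and repeating the two-curve estimate, and for $m\ge 2$ it bypasses the pencil altogether via a linear-system multiplicity bound. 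Your sketch does not address either issue.
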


Theorem~\ref{theorem:Cheltsov-Shramov-new-series} can be proved along the
same lines as the results of~\cite{ChShPa08}.

\smallskip
The~results of~\cite{Ti90}, \cite{JoKo01b}, \cite{Ara02},
\cite{BoGaNa02}, \cite{BoGaNa03}, \cite{ChShPa08} together with
Theorem~\ref{theorem:Cheltsov-Shramov-new-series}
imply the~following result concerning orbifold K\"ahler--Einstein
metrics on the Del Pezzo hypersurfaces $X$.

\begin{corollary}
\label{corollary:BGN-CPS} Suppose that $I<3a_0/2$. Then
\begin{itemize}
\item either $X$ admits an~orbifold K\"ahler--Einstein metric,%
\item or one of the~following possible exceptions occur:
\begin{itemize}
\item there exist a~non-negative integer $k<I$ and a~positive
integer $a\geqslant I+k$ such that
$$
\big(a_{0},a_{1},a_{2},a_{3},d\big)=\big(I-k,I+k,a,a+k,2a+k+I\big),
$$

\item the~quintuple $(a_{0},a_{1},a_{2},a_{3},d)$ lies in the~set
$$
\left\{\aligned
&(2,3,4,7,14), (7,10,15,19,45), (7,18,27,37,81),\\
&(7,15,19,32,64), (7,19,25,41,82), (7,26,39,55,117) \\
\endaligned\right\},
$$
\item $(a_{0},a_{1},a_{2},a_{3},d)=(1,3,5,7,15)$ and $\phi(x_{0},x_{1},x_{2},x_{3})$ does not contain $x_{1}x_{2}x_{3}$,%
\item $(a_{0},a_{1},a_{2},a_{3},d)=(2,3,4,5,12)$ and $\phi(x_{0},x_{1},x_{2},x_{3})$ does not contain $x_{1}x_{2}x_{3}$.%
\end{itemize}
\end{itemize}
\end{corollary}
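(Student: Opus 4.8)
The plan is to reduce the corollary to the threshold criterion of Theorem~\ref{theorem:KE}. Since $n=3$ makes $X$ a del Pezzo surface with $\dim(X)=2$, Theorem~\ref{theorem:KE} produces an orbifold K\"ahler--Einstein metric as soon as $\lct(X)>2/3$. Hence it suffices to establish the inequality $\lct(X)>2/3$ for every quasismooth well-formed $X$ satisfying the hypotheses, apart from the quintuples and equation conditions recorded in the exception list, where $\lct(X)$ fails to exceed $2/3$ and Theorem~\ref{theorem:KE} gives no information.

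First I would make the enumeration of cases explicit. The constraint $I<3a_0/2$ holds automatically when $I=1$, and the admissible quintuples with $I=1$ are then exactly those of Theorem~\ref{theorem:Kollar-Johnson}: the series $(2,2m+1,2m+1,4m+1,8m+4)$ together with the listed sporadic set. For $I\geq 2$ the hypothesis that $X$ is not a $\BGN$-type surface together with $I<3a_0/2$ places $X$ in the list of Theorem~\ref{theorem:BGN}, and Theorem~\ref{theorem:Cheltsov-Shramov-BGN} removes the bound $I\leq 10$, so the same finite family of infinite series and sporadic quintuples exhausts all remaining cases.

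Next I would traverse this list and invoke the known values of $\lct(X)$. For the index-one cases the inequality $\lct(X)>2/3$ for each entry of Theorem~\ref{theorem:Kollar-Johnson}---including the whole series $(2,2m+1,2m+1,4m+1,8m+4)$---is supplied by \cite{Ti90}, \cite{JoKo01b} and \cite{Ara02}; the sole index-one exception is $(1,3,5,7,15)$, where \cite{ChShPa08} shows $\lct(X)=1$ when $\phi$ contains the monomial $x_1x_2x_3$ but $\lct(X)\leq 2/3$ otherwise, which is why this quintuple enters the exception list precisely under the condition that $\phi$ omits $x_1x_2x_3$. For $I\geq 2$ the threshold of every series and sporadic quintuple of Theorem~\ref{theorem:BGN} is computed in \cite{BoGaNa02}, \cite{BoGaNa03} and \cite{ChShPa08}, and all of them yield $\lct(X)>2/3$ except for the six sporadic quintuples $(2,3,4,7,14)$, $(7,10,15,19,45)$, $(7,18,27,37,81)$, $(7,15,19,32,64)$, $(7,19,25,41,82)$, $(7,26,39,55,117)$, where $\lct(X)$ does not exceed $2/3$, and the quintuple $(2,3,4,5,12)$ with $\phi$ omitting $x_1x_2x_3$, where Example~\ref{example:1-3-5-7-15} gives $\lct(X)=8/15<2/3$. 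These coincide with the stated exceptions.

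The only member of the list not covered by the existing literature is the series $(3,3m+1,3m+2,6m+1,12m+5)$, which was inadvertently omitted from \cite[Theorem~4.5]{BoGaNa03}; for it the required bound is exactly Theorem~\ref{theorem:Cheltsov-Shramov-new-series}, giving $\lct(X)=1>2/3$ and hence an orbifold K\"ahler--Einstein metric by Theorem~\ref{theorem:KE}. Combining all of these statements proves the corollary. I expect the genuine difficulty to lie not in any single threshold estimate but in the completeness bookkeeping: one must verify that Theorems~\ref{theorem:Kollar-Johnson}, \ref{theorem:BGN} and~\ref{theorem:Cheltsov-Shramov-BGN} enumerate every admissible quintuple and that the union of exceptions captures all quintuples and equation-dependent subcases with $\lct(X)\leq 2/3$, so that no surface silently evades the strict threshold required by Theorem~\ref{theorem:KE}.
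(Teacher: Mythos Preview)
Your proposal is correct and follows the same approach as the paper: the paper does not give a detailed argument for Corollary~\ref{corollary:BGN-CPS} but simply states that it is implied by the results of \cite{Ti90}, \cite{JoKo01b}, \cite{Ara02}, \cite{BoGaNa02}, \cite{BoGaNa03}, \cite{ChShPa08} together with Theorem~\ref{theorem:Cheltsov-Shramov-new-series}, which is exactly the synthesis you carry out. Your write-up is in fact more explicit than the paper's, spelling out the reduction via Theorem~\ref{theorem:KE}, the case split using Theorems~\ref{theorem:Kollar-Johnson}, \ref{theorem:BGN} and~\ref{theorem:Cheltsov-Shramov-BGN}, and the role of the new series computation.
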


\begin{remark}\label{remark:mnogo}
One can show that there are infinitely many quintuples
$$(i-k,i+k,a,a+k,2a+k+i)$$
such~that there exists a~quasi\-smooth
well-formed hypersurface in $\mathbb{P}(i-k,i+k,a,a+k)$ of
degree~$2a+k+i$, where $k$, $a$, $i$ are non-negative integers
such that $0\leqslant k<i$ and $a\geqslant i+k$.
\end{remark}

\begin{example}
\label{example:BGN}
A~general hypersurface in $\mathbb{P}(1, 2n-1,
2n-1, 3n-2)$ of degree~$6n-3$ is~a~quasi\-smooth well-formed del
Pezzo surface for every positive integer $n$.
This series corresponds to the values $k=n-1$, $a=2n-1$ and $i=n$
of Remark~\ref{remark:mnogo}.
\end{example}

We thank C.\,Boyer, B.\,Nill, D.\,Orlov, J.\,Park, J.\,Stevens,
G.\,Tian, S.\,S.-T.\,Yau and Y.\,Yu for very useful discussions.
Special thanks go to Laura Morris for checking the
computations in~\cite{YauYu03} and to
Erik Paemurru for finding a gap in an earlier
version of our paper.

We are grateful to Pohang Mathematics Institute (PMI) for
hospitality.

\section{Technical result}
\label{section:examples}

Let $X$ be a~quasismooth hypersurface in
$\mathbb{P}(a_{0},a_{1},a_{2},a_{3})$ of degree $d$ (throughout this section
we will not assume that the numbers $a_i$ are ordered).
The hypersurface $X$ is
given~by
$$
\phi\big(x,y,z,w\big)=0\subset\mathbb{P}\big(a_{0},a_{1},a_{2},a_{3}\big)\cong\mathrm{Proj}\Big(\mathbb{C}\big[x,y,z,w\big]\Big),
$$
where $\mathrm{wt}(x)=a_{0}$, $\mathrm{wt}(y)=a_{1}$,
$\mathrm{wt}(z)=a_{2}$, $\mathrm{wt}(w)=a_{3}$,
and $\phi(x,y,z,w)$ is a~quasihomogeneous polynomial of degree $d$.

\begin{definition}
\label{definition:degenerate}
We say that $X$ is degenerate if $d=a_{i}$
for some $i$ (cf. \cite[Definition~6.5]{IF00}).%
\end{definition}

The purpose of this section is to prove the following result.

\begin{theorem}
\label{theorem:technical} Suppose that $a_0\le\ldots\le a_3$, and the
hypersurface
$X\subset\mathbb{P}(a_{0},a_{1},a_{2},a_{3})$
is a well-formed non-degenerate del Pezzo surface. Then
\begin{itemize}
\item either  there exist a~non-negative integer $k<I$ and
a~positive integer $a\geqslant I+k$ such that
$$
\big(a_{0},a_{1},a_{2},a_{3},d\big)=\big(I-k,I+k,a,a+k,2a+k+I\big),
$$
\item or $I=a_{i}+a_{j}$ for some distinct $i$ and $j$,
\item or $I=a_{i}+\frac{a_{j}}{2}$ for some distinct $i$ and $j$,%
\item or $(a_{0},a_{1},a_{2},a_{3},d,I)$ belongs to one of
the~infinite series listed in
Table~\ref{table:1},%
\item or $(a_{0},a_{1},a_{2},a_{3},d,I)$ lies in the~sporadic set listed in Table~\ref{table:2}.%
\end{itemize}
\end{theorem}

\begin{remark}
Note that the first three cases of Theorem~\ref{theorem:technical}
are not mutually exclusive. On the other hand,
since the most interesting cases (say, from the point
of view of K\"ahler--Einstein metrics) appear in the last two cases
of Theorem~\ref{theorem:technical}, we designed the tables
in Appendix~\ref{section:infinite-series} so that the cases listed
there are mutually exclusive, and none of them is contained in
any of the first three cases of Theorem~\ref{theorem:technical}.
One can check that for each sixtuple
$(a_{0},a_{1},a_{2},a_{3},d,I)$ listed in Table~\ref{table:1} and
\ref{table:2}, there exists a~well-formed quasismooth hypersurface
in $\mathbb{P}(a_{0},a_{1},a_{2},a_{3})$ of degree~$d$
(apparently, this is not the case with the first three cases of
Theorem~\ref{theorem:technical}).
\end{remark}

\begin{remark}
\label{remark:lct-special-cases} If $I=a_{i}+a_{j}$ or
$I=a_{i}+a_{j}/2$ for some $i$ and $j$, then
$\mathrm{lct}(X)\leqslant 2/3$.
Unfortunately, we do not know
how to handle the problem of existence of K\"ahler--Einstein
metrics in these cases. Neither we know this for
the first case of Theorem~\ref{theorem:technical}.
Note that the Bishop and Lichnerowicz
obstructions (see Theorem~\ref{theorem:Bishop-Lichnerowicz})
are not enough to settle this question.
\end{remark}

\bigskip
The proof of Theorem~\ref{theorem:technical} is based on the
classification of isolated three-dimensional
quasihomogeneous rational hypersurface singularities.
Consider a singularity $(V,O)$
defined by the~equation
$$
\phi\big(x,y,z,w\big)=0\subset\mathbb{C}^{4}\cong\mathrm{Spec}\Big(\mathbb{C}\big[x,y,z,w\big]\Big),
$$
where
$O$ is the~origin of $\mathbb{C}^{4}$.
Suppose that $(V,O)$ is an isolated singularity (this happens if and
only if the corresponding hypersurface
$X\subset\mathbb{P}(a_{0},a_{1},a_{2},a_{3})$ is quasismooth).
Suppose also that $V$ is indeed
singular at the~point $O$, i.\,e.
$\mathrm{mult}_{O}(V)\ge 2$ (this happens if and
only if the corresponding hypersurface
$X\subset\mathbb{P}(a_{0},a_{1},a_{2},a_{3})$ is non-degenerate).
The following classificational result may be obtained by studying
Newton diagrams of the corresponding polynomials.

\begin{theorem}[{\cite[Theorem~2.1]{YauYu03}}]
\label{theorem:Newton}
One has
$$
\phi\big(x,y,z,w\big)=\xi\big(x,y,z,w\big)+\chi\big(x,y,z,w\big)
$$
where $\xi(x,y,z,w)$ and $\chi(x,y,z,w)$ are~quasihomogeneous
polynomials of degree $d$ with respect to the~weights
$\mathrm{wt}(x)=a_{0}$, $\mathrm{wt}(y)=a_{1}$,
$\mathrm{wt}(z)=a_{2}$, $\mathrm{wt}(t)=a_{3}$ such that
the~quasihomogeneous polynomials $\xi(x,y,z,w)$ and
$\chi(x,y,z,w)$ do not have common monomials,~the~equation
$$
\xi\big(x,y,z,w\big)=0\subset\mathbb{C}^{4}\cong\mathrm{Spec}\Big(\mathbb{C}\big[x,y,z,w\big]\Big),
$$
defines an~isolated  singularity, and $\xi(x,y,z,w)$ is one of
the~following polynomials:
\begin{itemize}
\item[\textbf{I}] $Ax^{\alpha}+By^{\beta}+Cz^{\gamma}+Dw^{\delta}$,%
\item[\textbf{II}] $Ax^{\alpha}+By^{\beta}+Cz^{\gamma}+Dzw^{\delta}$,%
\item[\textbf{III}] $Ax^{\alpha}+By^{\beta}+Cz^{\gamma}w+Dzw^{\delta}$,%
\item[\textbf{IV}] $Ax^{\alpha}+Bxy^{\beta}+Cz^{\gamma}+Dzw^d$,%
\item[\textbf{V}] $Ax^{\alpha}y+Bxy^{\beta}+Cz^{\gamma}+Dzw^{\delta}$,%
\item[\textbf{VI}] $Ax^{\alpha}y+Bxy^{\beta}+Cz^{\gamma}w+Dzw^{\delta}$,%
\item[\textbf{VII}] $Ax^{\alpha}+By^{\beta}+Cyz^{\gamma}+Dzw^{\delta}$,%
\item[\textbf{VIII}] $Ax^{\alpha}+By^{\beta}+Cyz^{\gamma}+Dyw^{\delta}+Ez^{\epsilon}w^{\zeta}$,%
\item[\textbf{IX}] $Ax^{\alpha}+By^{\beta}w+Cz^{\gamma}w+Dyw^{\delta}+Ey^{\epsilon}z^{\zeta}$,%
\item[\textbf{X}] $Ax^{\alpha}+By^{\beta}z+Cz^{\gamma}w+Dyw^{\delta}$,%
\item[\textbf{XI}] $Ax^{\alpha}+Bxy^{\beta}+Cyz^{\gamma}+Dzw^{\delta}$,%
\item[\textbf{XII}] $Ax^{\alpha}+Bxy^{\beta}+Cxz^{\gamma}+Dyw^{\delta}+Ey^{\epsilon}z^{\zeta}$,%
\item[\textbf{XIII}] $Ax^{\alpha}+Bxy^{\beta}+Cyz^{\gamma}+Dyw^{\delta}+Ez^{\epsilon}w^{\zeta}$,%
\item[\textbf{XIV}] $Ax^{\alpha}+Bxy^{\beta}+Cxz^{\gamma}+Dxw^{\delta}+Ey^{\epsilon}z^{\zeta}+Fz^{\eta}w^{\theta}$,%
\item[\textbf{XV}] $Ax^{\alpha}y+Bxy^{\beta}+Cxz^{\gamma}+Dzw^{\delta}+Ey^{\epsilon}z^{\zeta}$,%
\item[\textbf{XVI}] $Ax^{\alpha}y+Bxy^{\beta}+Cxz^{\gamma}+Dxw^{\delta}+Ey^{\epsilon}z^{\zeta}+Fz^{\eta}w^{\theta}$,%
\item[\textbf{XVII}] $Ax^{\alpha}y+Bxy^{\beta}+Cyz^{\gamma}+Dxw^{\delta}+Ey^{\epsilon}w^{\zeta}+Fx^{\eta}z^{\theta}$,%
\item[\textbf{XVIII}] $Ax^{\alpha}z+Bxy^{\beta}+Cyz^{\gamma}+Dyw^{\delta}+Ez^{\epsilon}w^{\zeta}$,%
\item[\textbf{XIX}] $Ax^{\alpha}z+Bxy^{\beta}+Cz^{\gamma}w+Dyw^{\delta}$,%
\end{itemize}
where $\alpha,\beta,\gamma,\delta$ are positive integers,
$\epsilon,\zeta,\eta,\theta$ are non-negative integers, and
$A,B,C,D,E,F$ are complex numbers.
\end{theorem}

We will refer to the latter polynomials according to case labelling
in Theorem~\ref{theorem:Newton}.
For simplicity of notations, we suppose that $A=B=C=D=E=F=1$
in the rest of the paper\footnote{The singularity defined by
$\xi(x, y, z, w)$ is not necessary isolated if $A=\ldots=F=1$
(this happens, for instance, in the case XIX if
$\alpha=\beta=\gamma=\delta=1$). We hope that such abuse of
notation will not lead to a confusion.}.

\bigskip

In order to prove Theorem~\ref{theorem:technical} we will
suppose that $d<\sum_{i=0}^{3}a_{i}$ (this happens if and
only if~$X$ is a del Pezzo surface, provided that~$X$ is well-formed).
Then the singularity $(V,O)$ is canonical
(see Remark~\ref{remark:rational-vs-canonical-etc}), and
thus $\mathrm{mult}_{O}(V)\le 3$.
Moreover, the~singularity $(V,O)$ is rational
(see Remark~\ref{remark:rational-vs-canonical-etc}).
The main result of~\cite{YauYu03} is a classification of
(the deformation families of) the quasihomogeneous polynomials
that define isolated three-dimensional
quasihomogeneous \emph{rational} hypersurface singularities
up to an analytical change of coordinates (in some sense it is
a refinement of Theorem~\ref{theorem:Newton}).
To give a classification
of quasismooth del Pezzo hypersurfaces in the
weighted projective spaces we actually need the classification
of such polynomials up to
the change of coordinates that is compatible with the corresponding
$\C^*$-action (i.\,e., the change of coordinates that respects the
weights).\footnote{
Note that these two classifications indeed differ. Say,
if one denotes by
$\upsilon(x,y,z,w)$
the~$(\alpha,\beta,\gamma,\delta)$-part of the~polynomial
$\xi(x,y,z,w)$, one sees that the cases when
$\upsilon$ has less than $4$ different monomials
are absent from the list of~\cite{YauYu03}.
These are
$\xi(x,y,z,w)=x^{\alpha}+y^{\beta}+z^{\gamma}w+zw^{\delta}$
with $\gamma=\delta=1$ (cf.~\cite[Case~III]{YauYu03}),
$\xi(x,y,z,w)=x^{\alpha}y+xy^{\beta}+z^{\gamma}+zw^{\delta}$ with
$\alpha=\beta=1$ (cf.~\cite[Case~V]{YauYu03}),
and
$\xi(x,y,z,w)=x^{\alpha}y+xy^{\beta}+z^{\gamma}w+zw^{\delta}$ with
$\alpha=\beta=1$ or/and $\gamma=\delta=1$
(cf.~\cite[Case~VI]{YauYu03}).
It is easy to check that the listed cases are
equivalent up to an analytical change of coordinates to
some other cases that are present in the list of~\cite{YauYu03},
but one can choose the weights of variables so that there does not
exist such change of coordinates that respects the weights.}
Indeed, while the weights of the variables are not fixed even if one fixes
a polynomial~$\xi(x,y,z,w)$ from Theorem~\ref{theorem:Newton}
that is homogeneous with respect to these weights
(since one can multiply all of them by some constant), the corresponding
\emph{well-formed} weighted projective space and thus
the family of the corresponding well-formed hypersurfaces
becomes fixed in this case.
Fortunately, these two classifications are not very far from each other.
To recover the latter from the former is not a difficult task,
but still it requires some
additional work. Luckily, to prove Theorem~\ref{theorem:technical}
we don't need to do it in full generality,
since we can disregard polynomials whose degree~$d$
(and thus the index $I$ either)
equals a sum of two of the weights. The latter
are included in one of the types of our resulting classification
(see Theorem~\ref{theorem:technical}).
If there is a \emph{unique} choice of weights
$\wt(x)$, $\wt(y)$, $\wt(z)$, $\wt(w)$ that makes some of the polynomials
obtained from
the polynomial~$\xi(x,y,z,w)$ by an analytical change of coordinates
quasihomogeneous, then one trivially obtains that any change of coordinates
that turns~$\xi$ into another quasihomogeneous polynomial
must agree with the corresponding $\C^*$-action.
Furthermore, this is the case if we restrict ourselves to the weights
that are at most~$d/2$, where $d$ is the total weight of a corresponding
polynomial (see~\cite[Lemma~4.3]{Saito}).
Therefore, the polynomials that we need to recover must be homogeneous with
respect to
the weights such that one of the weights, say~$\wt(x)$, is strictly larger
than~$d/2$.
In this case we have $\xi=xg+h$, where~$g$ and~$h$ are polynomials that do not
depend on~$x$. By quasismoothness at least one other variable
occurs linearly in $g$, so by a $\C^*$-equivariant coordinate transformation
we may assume that $g$ is a coordinate, say $y$. Now collect all terms
divisible by $y$ and absorb them in $xy$ by a ($\C^*$-equivariant)
coordinate change in $x$. Still we have to take care of all polynomials
that are obtained from~$\xi$ by an analytical change of coordinates
(note that these may not contain a monomial that is a product of two variables
even if~$\xi$ does).
The rank of the hypersurface singularity in question
is at least $2$.
The latter is preserved under the analytical
change of coordinates, so it is enough for our purposes
to describe all possible quasihomogeneous polynomials $f$ (say,
in variables $x_0$, $x_1$, $x_2$ and $x_4$)
giving a singularity of rank $r$ equal to $2$, $3$ or $4$,
and not containing monomials $x_ix_j$ for $i\neq j$.
The latter condition implies that (up to $\C^*$-equivariant
coordinate change) $f=x_0^2+\ldots+ x_r^2+
g(x_{r+1}, \ldots, x_4)$, where $g$ is a polynomial in $4-r$ variables
of rank~$0$ (i.\,e. corank $0\le 4-r\le 2$). If $r=4$, then $g=0$,
and if $r=3$, then $g=x_3^n$, so that in both of these cases
$f$ is found in~\cite[Case~I]{YauYu03}. If $r=2$,
applying~\cite[\S13.1]{AGV} (and keeping in mind~\cite[Lemma~4.3]{Saito}),
we again see that
$f$ is contained in the list of~\cite{YauYu03}
(cases~I.1, II.1 and~III.1).\footnote{We are grateful
to J.\,Stevens who explained this argument to us.}

To summarize,
for every~$\xi(x,y,z,w)$ the~possible
values (up to a $\C^*$-equivariant change of coordinates)
of the~quadruple
$(\alpha,\beta,\gamma,\delta)$ are listed
in~\cite{YauYu03}
up to the polynomials that contain a monomial which is a product of two
variables.
Unfortunately, as it usually happens with long lists,
in the list of~\cite{YauYu03} there are some omissions.
Namely, apart from minor misprints (see
Examples~\ref{example:2} and~\ref{example:8} below)
the following cases are omitted\footnote{
We are grateful to L.\,Morris who checked the
computations of~\cite{YauYu03} and found these omissions.}
\begin{itemize}
\item [\textbf{XI}] $\xi(x,y,z,w)=x^{\alpha}+xy^{\beta}+yz^{\gamma}+zw^{\delta}$ and $(\alpha,\beta,\gamma,\delta)=(2,4,13,3)$,%
\item [\textbf{XII}]
$\xi(x,y,z,w)=x^{\alpha}+xy^{\beta}+xz^{\gamma}+yw^{\delta}+y^{\epsilon}z^{\zeta}$
and
$$
\big(\alpha,\beta,\gamma,\delta,\epsilon,\zeta\big)\in\Big
\{\big(5,4,3,2,1,3\big),\big(7,4,3,2,2,2\big),\big(6,5,3,2,1,3\big)\Big\}.
$$
\end{itemize}

\begin{remark}\label{remark:twice}
Note that different cases in the list of~\cite{YauYu03} are
not mutually exclusive. For example, for~\cite[Case~I.1]{YauYu03}
with $r=s=2$
and~\cite[Case~XIII.1(7)]{YauYu03} with $r=2$ there is a $\C^*$-action
and a change of coordinates equivariant with respect to this
action such that the two (deformation families of) the singularities
are the same (actually, such coincidences are numerous in~\cite{YauYu03}).
A side effect of this is that sometimes one has to make a
($\C^*$-equivariant) coordinate change to find a given polynomial in the
list of~\cite{YauYu03}. For example,
the polynomial $\xi=x^4+xy^4+xz^3+yw^2+y^4z$ is not found
in~\cite[Case~XII]{YauYu03} as one could possibly expect,
but in the new coordinates $x'=x$, $y'=z-x$, $z'=y$ and $w'=w$ it gives
the same deformation family as~\cite[Case~XI.3(16)]{YauYu03}
for $r=s=4$.
\end{remark}

\bigskip
Therefore, given a list of~\cite{YauYu03},
to prove Theorem~\ref{theorem:technical}, we must find
all singularities in this list that correspond to the well-formed
hypersurfaces $X\subset\P(a_0,a_1,a_2,a_3)$. This means that we need to find
all~possible values of the~quadruple
$(\alpha,\beta,\gamma,\delta)$ such that
$$
\mathrm{gcd}\big(a_i,a_{j},a_k\big)=1
$$
and $d$ is divisible by $\mathrm{gcd}(a_i,a_{j})$ for all $i\ne
j\ne k\ne i$. Let us show how to do this in the few typical cases.

\begin{example}
\label{example:1} Suppose that the~hypersurface $X$ is
well-formed, and suppose that the~quasihomogeneous polynomial
$\xi(x,y,z,w)$ is found in the~third part
of~\cite[Case~X.3(1)]{YauYu03}. Then
$$
\xi\big(x,y,z,w\big)=x^2+y^3z+z^5w+yw^u,
$$
where $5\leqslant u\leqslant 18$. Hence
$2a_0=3a_1+a_2=5a_2+a_3=a_1+ua_3$. Put $a_3=a$. Then
$$
\big(a_0, a_1, a_2, a_3,d\big)=\left(\frac{(15u+1)a}{22}, \frac{(4u+1)a}{11}, \frac{(3u-2)a}{11}, a, \frac{(15u+1)a}{11}\right),%
$$
where either $a=1$ or $a=11$, because $\gcd(a_1, a_2, a_3)=1$.

Suppose that $a=1$. Then $3u-2$ and $4u+1$ are divisible by $11$.
We see that $u=8$. Then
$$
a_0=\frac{(15u+1)a}{22}=\frac{121}{22}\not\in\mathbb{Z},
$$
which is a~contradiction.

We see that $a=11$. Then $u$ must be odd for $a_0$ to be integer.
Thus, we obtain $7$ solutions:
\begin{itemize}
\item $(a_0, a_1, a_2, a_3,d,I)=(38, 21, 13, 11, 76, 7)$,%
\item $(a_0, a_1, a_2, a_3,d,I)=(53, 29, 19, 11, 106, 6)$,%
\item $(a_0, a_1, a_2, a_3,d,I)=(68, 37, 25, 11, 136, 5)$,%
\item $(a_0, a_1, a_2, a_3,d,I)=(83, 45, 31, 11, 166, 4)$,%
\item $(a_0, a_1, a_2, a_3,d,I)=(98, 53, 37, 11, 196, 3)$,%
\item $(a_0, a_1, a_2, a_3,d,I)=(113, 61, 43, 11, 226, 2)$,%
\item $(a_0, a_1, a_2, a_3,d,I)=(128, 69, 49, 11, 256, 1)$.
\end{itemize}
\end{example}

\begin{example}
\label{example:2} Suppose that the~hypersurface $X$ is
well-formed, and suppose that the~quasihomogeneous polynomial
$\xi(x,y,z,w)$ is found in the~second part
of~\cite[Case~XII.3(16)]{YauYu03}. Then\footnote{Note that there
is a~misprint in~\cite[Case~XII.3(16)]{YauYu03}, and one should
read $(5, 4)$ instead of $(4, 5)$.}
$$
\xi\big(x,y,z,w\big)= x^3+xy^5+xz^2+yw^4+y^{\epsilon}z^{\zeta},
$$
which gives $3a_0=a_0+5a_1=a_0+2a_2=a_1+4a_3$, which contradicts
the~well-formedness of $X$.
\end{example}

\begin{example}
\label{example:3}  Suppose that the~hypersurface $X$ is
well-formed, and suppose that the~quasihomogeneous polynomial
$\xi(x,y,z,w)$ is found in \cite[Case~I.2]{YauYu03}. Then
$$
\xi\big(x,y,z,w\big)=x^2+y^3+z^3+w^r,
$$
where $r\in\mathbb{Z}_{\geqslant 3}$. Hence $2a_0=3a_1=3a_2=ra_3$.
Thus $a_0=3$ and $a_1=a_2=2$, because
$$
\gcd\big(a_0, a_1, a_2\big)=1.%
$$

We see that $a_3=6/r$. Since $r\geqslant 3$, we have $a_3=1$,
because $\gcd(a_1, a_2, a_3)=1$.
\end{example}

\begin{example}
\label{example:4} Suppose that the~hypersurface $X$ is
well-formed, and suppose that the~quasihomogeneous polynomial
$\xi(x,y,z,w)$ is found in the~fourth part
of~\cite[Case~IX.3(3)]{YauYu03}. Then
$$
\xi\big(x,y,z,w\big)=x^3+y^2w+z^6w+yw^s+y^{\epsilon}z^{\zeta},
$$
where $s\in\mathbb{Z}_{\geqslant 6}$. Hence
$3a_0=2a_1+a_3=6a_2+a_3=a_1+sa_3$. Put $a_3=a$. Then
$$
\big(a_0, a_1, a_2, a_3,d\big)=\left(\frac{(2s-1)a}{3}, (s-1)a, \frac{(s-1)a}{3}, a, (2s-1)a\right),%
$$
where $d$ is divisible by $\gcd(a_1, a_2)=(s-1)a/3$. Thus, we have
$$
s-1\mid 3\big(2s-1\big),
$$
which is possible only if $3$ is divisible by $s-1$, which
contradicts the~assumption $s\geqslant 6$.
\end{example}

\begin{example}
\label{example:5} Suppose that the~hypersurface $X$ is
well-formed, and suppose that the~quasihomogeneous polynomial
$\xi(x,y,z,w)$ is found in the~first part
of~\cite[Case~VIII.3(5)]{YauYu03}. Then
$$
\xi\big(x,y,z,w\big)=x^2+y^s+yz^3+yw^3+z^{\epsilon}w^{\zeta},
$$
where $s\in\mathbb{Z}_{\geqslant 4}$. Hence
$2a_0=sa_1=a_1+3a_2=a_1+3a_3$. Put $a_1=a$. Then
$$
\big(a_0, a_1, a_2, a_3,d\big)=\left(\frac{sa}{2}, a, \frac{(s-1)a}{3}, \frac{(s-1)a}{3}, sa\right),%
$$
where $d=sa$ is divisible by $\gcd(a_2, a_3)=(s-1)a/3$, because
$X$ is well-formed. Thus
$$
s-1\mid 3s,
$$
which implies that $s=4$, because $s\geqslant 4$. Hence, we have
$$
\big(a_0, a_1, a_2, a_3,d\big)=\big(2a, a, a, a, 4a\big),
$$
which gives $a=1$. Then $X$ is a~smooth del Pezzo surface $X$ such
that $K_X^2=2$.
\end{example}

\begin{example}
\label{example:6} Suppose that the~hypersurface $X$ is
well-formed, and suppose that the~quasihomogeneous polynomial
$\xi(x,y,z,w)$ is found in the~second part
of~\cite[Case~XVIII.2(2)]{YauYu03}. Then
$$
\xi\big(x,y,z,w\big)=x^2z+xy^2+yz^s+yw^3+z^{\epsilon}w^{\zeta},
$$
where $s\in\mathbb{Z}_{\geqslant 4}$. Hence
$2a_0+a_2=a_0+2a_1=a_1+sa_2=a_1+3a_3$. Put $a_2=a$. Then
$$
\big(a_0, a_1, a_2, a_3,d\big)=\left(\frac{(2s-1)a}{3}, \frac{(s+1)a}{3}, a, \frac{sa}{3}, \frac{(4s+1)a}{3}\right).%
$$

Since either $s$ or $s+1$ is not divisible by $3$, we see that $a$
is divisible by $3$. But
$$
\gcd\big(a_0, a_1, a_2\big)=1,
$$
because $X$ is well-formed. Then $a=3$. Thus, we have
$$
\big(a_0, a_1, a_2, a_3,d\big)=\big(2s-1, s+1, 3, s, 4s+1\big),
$$
where $s\in\mathbb{Z}_{\geqslant 4}$. Note that if $s=2 \mod 3$,
then
$$
\gcd\big(a_0, a_1, a_2\big)=3,
$$
which is impossible. Then either $s=0 \mod 3$ or $s=1 \mod 3$.

Suppose that $s=0 \mod 3$. Then $s=3n$ for some
$n\in\mathbb{Z}_{\geqslant 2}$. We have
$$
\big(a_0, a_1, a_2, a_3,d\big)=\big(6n-1, 3n+1, 3, 3n, 12n+1\big),%
$$
and $d$ is not divisible by $\gcd(a_2, a_3)=3$, which contradicts
 the~well-formedness of $X$.

We see that $s=1 \mod 3$. Then $s=3n+1$ for some
$n\in\mathbb{Z}_{\geqslant 2}$. We have
$$
\big(a_0, a_1, a_2, a_3,d,I\big)=\big(6n+1, 3n+2, 3, 3n+1, 12n+5, 2\big).%
$$
\end{example}

\begin{example}
\label{example:7} Suppose that the~hypersurface $X$ is
well-formed, and suppose that the~quasihomogeneous polynomial
$\xi(x,y,z,w)$ is found in~\cite[Case~IX.2(1)]{YauYu03}. Then
$$
\xi\big(x,y,z,w\big)=x^2+y^2w+z^rw+yw^s+y^{\epsilon}z^{\zeta},
$$
where $r\in\mathbb{Z}_{\geqslant 2}\ni s$. Hence
$2a_0=2a_1+a_3=ra_2+a_3=a_1+sa_3$. Put $a_2=a$. Then
$$
\big(a_0, a_1, a_2, a_3,d\big)=\left(\frac{(2s-1)ra}{4(s-1)}, \frac{ra}{2}, a, \frac{ra}{2(s-1)}, \frac{(2s-1)ra}{2(s-1)}\right).%
$$

Note that $\gcd(2s-1,4(s-1))=1$. Thus $ra$ is divisible by
$4(s-1)$. But
$$
\gcd\big(a_0, a_1, a_3\big)=1,
$$
because the~hypersurface $X$ is well-formed. Then $ra=4(s-1)$.
Hence, we have
$$
\big(a_0, a_1, a_2, a_3,d\big)=\left(2s-1, 2s-2, \frac{4s-4}{r}, 2, 4s-2\right),%
$$
where $d$ is divisible by $\gcd(a_1, a_2)$. Hence $r(4s-2)$ is
divisible by $s-1$. Then
$$
r=k(s-1)
$$
for some $k\in\Z_{\geqslant 1}$. Since $4/k=a_2\in\Z_{>0}$, one
obtains that $k\in\{1,2,4\}$.

If $k\in\{1,2\}$, then $\gcd(a_1, a_2, a_3)=2$, which is
impossible. We see that $k=4$. Then
$$
\big(a_0, a_1, a_2, a_3,d\big)=\big(2s-1, 2s-2, 1, 2, 4s-2\big).
$$
\end{example}

\begin{example}
\label{example:8} Suppose that the~hypersurface $X$ is
well-formed, and suppose that the~quasihomogeneous polynomial
$\xi(x,y,z,w)$ is found in the~first part
of~\cite[Case~V.3(4)]{YauYu03}. Then\footnote{Note that there is
a~misprint in \cite[Case~V.3(4)]{YauYu03} and one should read $(r,
s)=(3, s)$ instead of $(s, r)=(3, s)$, and the~same correction
should be made in the~second and the~third part of this subcase.}
$$
\xi\big(x,y,z,w\big)\in\Big\{yx^3+xy^3+z^2+zw^s,\ yx^3+xy^3+z^s+zw^2\Big\},%
$$
where  $s\in\mathbb{Z}_{\geqslant 3}$. If
$\xi(x,y,z,w)=yx^3+xy^3+z^2+zw^s$, then
$$
3a_0+a_1=a_0+3a_1=2a_2=a_2+sa_3
$$
which contradicts the~well-formedness of the~hypersurface $X$.

We have $\xi(x,y,z,w)=yx^3+xy^3+z^s+zw^2$. Then
$3a_0+a_1=a_0+3a_1=sa_2=a_2+2a_3$ and
$$
\big(a_0, a_1, a_2, a_3,d\big)=\left(\frac{sa}{4}, \frac{sa}{4}, a, \frac{(s-1)a}{2}, sa\right),%
$$
where $a_2=a$. Since $\gcd(a_0, a_1, a_2)=1$, we see that $4\mid
a$. Then $a\in\{2,4\}$.

Suppose that $a=2$. Then
$$
\big(a_0, a_1, a_2, a_3,d\big)=\left(\frac{s}{2}, \frac{s}{2}, 2, s-1, 2s\right),%
$$
where $s$ is divisible by $2$ and not divisible by $4$. Then
$s=4n+2$, where $n\in\mathbb{Z}_{\geqslant 1}$. We have
$$
\big(a_0, a_1, a_2, a_3,d,I\big)=\big(2n+1, 2n+1, 2, 4n+1, 8n+4,1\big).%
$$

Suppose that $a=4$. Then $(a_0, a_1, a_2, a_3,d)=(s, s, 4,
2s-2,4s)$. Then
$$
\big(a_0, a_1, a_2, a_3,d,I\big)=\big(2n+1, 2n+1, 4, 4n,8n+4,2\big)%
$$
for some $n\in\mathbb{Z}_{\geqslant 1}$, because $s$ must be odd.
\end{example}

\begin{example}
\label{example:9} Suppose that the~hypersurface $X$ is
well-formed, and suppose that the~quasihomogeneous polynomial
$\xi(x,y,z,w)$ is found in the~first part
of~\cite[Case~XI.3(14)]{YauYu03}. Then
$$
\xi\big(x,y,z,w\big)=x^3+xy^3+yz^s+zw^2,
$$
where $s\in\mathbb{Z}_{\geqslant 3}$. Hence
$3a_0=a_0+3a_1=a_1+sa_2=a_2+2a_3$. Put $a_2=a$. Then
$$
\big(a_0, a_1, a_2, a_3,d\big)=\left(\frac{3as}{7}, \frac{2as}{7}, a, \frac{a(9s-7)}{14}, \frac{9as}{7}\right),%
$$
and $\gcd(a_0, a_1, a_2)=1$, because $X$ is well-formed. Thus
either $a=1$ or $a=7$.

Suppose that $a=1$. Then
$$
\big(a_0, a_1, a_2, a_3,d\big)=\left(\frac{3s}{7}, \frac{2s}{7}, 1, \frac{9s-7}{14},\frac{9s}{7}\right),%
$$
which implies that $s=7k$ for some $k\in\mathbb{Z}_{\geqslant 1}$.
Hence, we have
$$
\big(a_0, a_1, a_2, a_3,d\big)=\left(3k, 2k, 1, \frac{9k-1}{2}, 9k\right),%
$$
which implies that $k=2n-1$ for some $n\in\mathbb{Z}_{\geqslant
1}$. We have
$$
\big(a_0, a_1, a_2, a_3,d,I\big)=\big(6n-3, 4n-2, 1, 9n-5, 18n-9, n\big).%
$$

Suppose that $a=7$. Then
$$
\big(a_0, a_1, a_2, a_3,d\big)=\left(3s, 2s, 7, \frac{9s-7}{2},9s\right),%
$$
which implies that $s=2k+1$ for some $k\in\mathbb{Z}_{\geqslant
1}$. Hence, we have
$$
\big(a_0, a_1, a_2, a_3,d\big)=\big(6k+3, 4k+2, 7, 9k+1, 18k+9\big),%
$$
but $\gcd(a_0, a_1, a_2)=1$. Then $k\neq 3 \mod 7$. Thus, we have
 the~following solutions:\footnote{Note that in the resulting tables
of Appendix~\ref{section:infinite-series} we split the first of the obtained
series into a sporadic case corresponding to $n=1$ and a shifted
series starting from $n=2$. This is done to ensure that
$a_0\le\ldots\le a_3$.}
\begin{itemize}
\item $(a_0, a_1, a_2, a_3,d,I)=(28n-22, 42n-33, 7, 63n-53, 126n-99, 7n-2)$,%
\item $(a_0, a_1, a_2, a_3,d,I)=(28n-18, 42n-27, 7, 63n-44, 126n-81, 7n-1)$,%
\item $(a_0, a_1, a_2, a_3,d,I)=(28n-10, 42n-15, 7,63n-26, 126n-45, 7n+1)$,%
\item $(a_0, a_1, a_2, a_3,d,I)=(28n-6, 42n-9, 7, 63n-17, 126n-27, 7n+2)$,%
\item $(a_0, a_1, a_2, a_3,d,I)=(28n-2, 42n-3, 7, 63n-8, 126n-9, 7n+3)$,%
\item $(a_0, a_1, a_2, a_3,d,I)=(28n+2, 42n+3, 7, 63n+1, 126n+9, 7n+4)$,%
\end{itemize}
where $n\in\mathbb{Z}_{\geqslant 1}$.
\end{example}

\begin{example}
\label{example:10} Suppose that the~hypersurface $X$ is
well-formed, and suppose that the~quasihomogeneous polynomial
$\xi(x,y,z,w)$ is found in~\cite[Case~VIII.2(1)]{YauYu03}. Then
$$
\xi\big(x,y,z,w\big)=x^2+y^2+yz^r+yw^s+z^{\epsilon}w^{\zeta},
$$
where $r\in\mathbb{Z}_{\geqslant 2}\ni s$. Hence
$2a_0=2a_1=a_1+ra_2=a_1+sa_3$. Put $a_2=a$. Then
$$
\big(a_0, a_1, a_2, a_3,d\big)=\left(ra, ra, a, \frac{ra}{s}, 2ra\right),%
$$
where $a=\gcd(a_0, a_1, a_2)=1$, because $X$ is well-formed. Thus,
we have
$$
\big(a_0, a_1, a_2, a_3,d\big)=\left(r, r, 1, \frac{r}{s},2r\right),%
$$
where $r/s=\gcd(a_0, a_1, a_3)=1$. Then $(a_0, a_1, a_2,
a_3,d)=(r, r, 1, 1,2r)$.
\end{example}

\begin{example}
\label{example:11} Suppose that the~hypersurface $X$ is
well-formed, and suppose that the~quasihomogeneous polynomial
$\xi(x,y,z,w)$ is found in \cite[Case~XIV.1(1)]{YauYu03}. Then
$$
\xi\big(x,y,z,w\big)=x^r+xy+xz^s+xw^t+y^{\epsilon}z^{\zeta}+z^{\eta}w^{\theta},
$$
where $r, s, t\in\mathbb{Z}_{\geqslant 2}$. Hence
$ra_0=a_0+a_1=a_0+sa_2=a_0+ta_3$. Put $a_0=a$. Then
$$
\big(a_0, a_1, a_2, a_3,d\big)=\left(a, (r-1)a, \frac{(r-1)a}{s}, \frac{(r-1)a}{t},ra \right),%
$$

It follows from the~well-formedness of the~hypersurface $X$ that
$$
\gcd\big(a_0, a_1, a_2\big)=\gcd\big(a_0, a_1, a_3\big)=1,
$$
so that $a$ divides $s$ and~$t$. Put $s=ap$ and $t=aq$ for some
$q\in\mathbb{Z}_{\geqslant 1}\ni p$. Then
$$
\gcd\left(\frac{r-1}{p}, \frac{r-1}{q}\right)=1,
$$
because $\gcd(a_1, a_2, a_3)=1$, where $r-1$ is divisible by $p$
and $q$. Thus, we see that
$$
p=mk,\ q=ml,\ r-1=mkl,
$$
where $m$, $k$ and $l$ are positive integers such that $\gcd(k,
l)=1$. Then
$$
\big(a_0, a_1, a_2, a_3,d\big)=\big(a, mkla, l, k,(mkl+1)a\big)
$$

By well-formedness one obtains that $d$ is divisible by $\gcd(a_1,
a_2)=l$. Then $l\mid a$ and
$$
l\mid \gcd\big(a_0, a_1, a_2\big)
$$
so that by well-formedness $l=1$. In a~similar way we get $k=1$.
Then
$$
\big(a_0, a_1, a_2, a_3,d,I\big)=\big(a, ma, 1, 1, (m+1)a,2\big),
$$
where $m$ and $a$ are arbitrary positive integers.
\end{example}

\medskip

The~proof of Theorem~\ref{theorem:technical} is similar in
the~remaining cases.

\section{Bishop vs Lichnerowicz}
\label{section:obstructions}

In this section, we prove
Theorem~\ref{theorem:Bishop-versus-Lichnerowicz}. Let
$\bar{a}_{0},\ldots,\bar{a}_{n},\bar{d}$ be positive real numbers
such that
$$
0<\sum_{i=0}^{n}\bar{a}_{i}-\bar{d}\leqslant n\bar{a}_{0}
$$
and
$\bar{a}_{0}\leqslant\bar{a}_{1}\leqslant\ldots\leqslant\bar{a}_{n}$,
where $n\geqslant 1$. To prove
Theorem~\ref{theorem:Bishop-versus-Lichnerowicz}, we must show
that
$$
\bar{d}\Bigg(\sum_{i=0}^{n}\bar{a}_{i}-\bar{d}\Bigg)^{n}\leqslant
n^n\prod_{i=0}^{n}\bar{a}_{i}.
$$

Put $\bar{I}=\sum_{i=0}^{n}\bar{a}_{i}-\bar{d}$. Then $I=\alpha
n\bar{a}_0$, where $\alpha\in\mathbb{R}$ such that
$0<\alpha\leqslant 1$. We must prove that
\begin{equation}
\label{eqation:obstructions}
\Bigg(\sum_{i=1}^{n}\bar{a}_{i}+\big(1-\alpha n\big)\bar{a}_0\Bigg)\bar{a}_0^{n-1}\alpha^n-\prod_{i=1}^{n}\bar{a}_{i}\leqslant 0.%
\end{equation}

Put $a_{i}=\bar{a}_{i}/\bar{a}_{0}$ for every
$i\in\{1,\ldots,n\}$. Then $(\ref{eqation:obstructions})$ is
equivalent to
\begin{equation}
\label{eq:alpha} \Bigg(\sum_{i=1}^{n}a_i+1-\alpha n\Bigg)\alpha^n-\prod_{i=1}^{n}a_i\leqslant 0,%
\end{equation}
where $a_1\geqslant 1,a_2\geqslant 1,\ldots,a_n\geqslant 1$. But
to prove $(\ref{eq:alpha})$ is enough to prove that
\begin{equation}
\label{eq:no-alpha} \sum_{i=1}^{n}a_i+1-n-\prod_{i=1}^{n}a_i\leqslant 0,%
\end{equation}
because the~derivative of the~left hand side of~$(\ref{eq:alpha})$
with respect to $\alpha$ equals
$$
n\alpha^{n-1}\Bigg(\sum_{i=1}^{n}a_i+1-\alpha\big(n+1\big)\Bigg)\geqslant n\alpha^{n-1}\Bigg(\sum_{i=1}^{n}a_i-n\Bigg)\geqslant 0,%
$$
since $\alpha\leqslant 1$ and $a_i\geqslant 1$  every
$i\in\{1,\ldots,n\}$. Let us prove~$(\ref{eq:no-alpha})$ by
induction on $n$.

We may assume that $n\geqslant 2$, and  $a_{i}\ne 1$ for every
$i\in\{1,\ldots,n\}$ by the~induction assumption.

\begin{lemma}
\label{lemma:obstructions-bound} Suppose that $a_i\geqslant n$ for
some $i\in\{1,\ldots,n\}$. Then the~inequality~\ref{eq:no-alpha}
holds.
\end{lemma}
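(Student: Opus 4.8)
The plan is to exploit the symmetry of the inequality~\ref{eq:no-alpha} in $a_1,\ldots,a_n$ together with the induction on $n$ that frames the whole argument. Since the left-hand side of~\ref{eq:no-alpha} is a symmetric function of the $a_i$, I would first reorder the variables so that it is $a_n$ that satisfies $a_n\geqslant n$. Freezing $a_1,\ldots,a_{n-1}$ and regarding the left-hand side as a function of $a_n$ gives
$$
f(a_n)=\Bigg(\sum_{i=1}^{n-1}a_i+1-n\Bigg)+a_n\Bigg(1-\prod_{i=1}^{n-1}a_i\Bigg),
$$
which is affine in $a_n$ with slope $1-\prod_{i=1}^{n-1}a_i\leqslant 0$, because every $a_i\geqslant 1$ forces $\prod_{i=1}^{n-1}a_i\geqslant 1$. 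Hence $f$ is non-increasing on the range $a_n\geqslant n$, and it suffices to verify~\ref{eq:no-alpha} at the smallest admissible value $a_n=n$.

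Substituting $a_n=n$ reduces the claim to
$$
n\prod_{i=1}^{n-1}a_i\geqslant \sum_{i=1}^{n-1}a_i+1 .
$$
At this point I would invoke the induction hypothesis, namely the $(n-1)$-variable form of~\ref{eq:no-alpha} applied to $a_1,\ldots,a_{n-1}$ (each $\geqslant 1$), which yields $\prod_{i=1}^{n-1}a_i\geqslant \sum_{i=1}^{n-1}a_i+2-n$. Feeding this bound in and clearing, the desired inequality becomes $(n-1)\sum_{i=1}^{n-1}a_i\geqslant (n-1)^2$, i.e. $\sum_{i=1}^{n-1}a_i\geqslant n-1$, which holds trivially since each of the $n-1$ summands is at least $1$. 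This closes the lemma.

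The only genuinely delicate point is the reduction to the boundary: one must check that the slope $1-\prod_{i=1}^{n-1}a_i$ is non-positive (this is exactly where $a_i\geqslant 1$ for all $i$ is used) and that monotonicity lets us replace $a_n$ by its smallest admissible value $n$. Everything after that is a one-line use of the induction hypothesis followed by the bound $\sum_{i=1}^{n-1}a_i\geqslant n-1$, with no further case analysis on the individual sizes of the $a_i$. Alternatively, one can bypass the reduction to the boundary altogether: multiplying the induction bound $\prod_{i=1}^{n-1}a_i\geqslant \sum_{i=1}^{n-1}a_i+2-n$ by $a_n>0$ and simplifying bounds the left-hand side of~\ref{eq:no-alpha} above by $(a_n-1)\big((n-1)-\sum_{i=1}^{n-1}a_i\big)$, which is $\leqslant 0$ because $a_n\geqslant n\geqslant 1$ and $\sum_{i=1}^{n-1}a_i\geqslant n-1$.
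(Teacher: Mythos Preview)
Your argument is correct, but it follows a different route from the paper's. The paper does not invoke the induction hypothesis at all inside this lemma: after assuming $a_n\geqslant n$ it writes the exact identity
\[
\sum_{i=1}^{n}a_i+1-n-\prod_{i=1}^{n}a_i=\sum_{i=1}^{n-1}\Big(a_i-\prod_{j=1}^{n-1}a_j\Big)+\big(a_n-n+1\big)\Big(1-\prod_{j=1}^{n-1}a_j\Big),
\]
and observes that each summand on the right is $\leqslant 0$ because $a_i\leqslant\prod_{j=1}^{n-1}a_j$ and $\prod_{j=1}^{n-1}a_j\geqslant 1$ whenever all $a_j\geqslant 1$. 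So the paper's proof of this lemma is a one-line decomposition, independent of the inductive frame.

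Your version instead feeds the $(n-1)$-variable case of~\ref{eq:no-alpha} back in. That is perfectly legitimate here, since the lemma sits inside an induction on $n$ and the hypothesis is available. What your approach buys is worth noting: your ``alternative'' paragraph in fact shows that for \emph{any} $a_n\geqslant 1$ (not merely $a_n\geqslant n$) one gets
\[
\sum_{i=1}^{n}a_i+1-n-\prod_{i=1}^{n}a_i\leqslant (a_n-1)\Big((n-1)-\sum_{i=1}^{n-1}a_i\Big)\leqslant 0,
\]
which already establishes~\ref{eq:no-alpha} in full by induction. In other words, your computation makes the paper's subsequent argument with the function $F$ and the interior-maximum trick unnecessary. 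The paper's decomposition, on the other hand, is self-contained for the lemma and does not borrow strength from the inductive hypothesis.
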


\begin{proof}
Without loss of generality, we may assume that $a_n\geqslant n$.
Then
$$
\sum_{i=1}^{n}a_i+1-n-\prod_{i=1}^{n}a_i=\sum_{i=1}^{n-1}\Bigg(a_i-\prod_{i=1}^{n-1}a_i\Bigg)+\big(a_n-n+1\big)\Bigg(1-\prod_{i=1}^{n-1}a_i\Bigg)\geqslant 0%
$$
which completes the~proof.
\end{proof}

Put $F(x_1,\ldots, x_n)=\sum_{i=1}^{n}x_i+1-n-\prod_{i=1}^{n}x_i$.
Let $U\subset\mathbb{R}^{n}$ be an~open set given by
$$
1<a_1<n,\ 1<a_2<n,\ldots, 1<a_n<n,
$$
and  suppose that $(\ref{eq:no-alpha})$ fails. Then
$F(a_{1},\ldots,a_{n})>0$. But
$$
\big(x_1,\ldots, x_n\big)\in\overline{U}\setminus U\Longrightarrow F\big(x_{1},\ldots,x_{n})\leqslant 0,%
$$
which implies that $F$ attains its maximum at some point $(A_1,
\ldots, A_{n})\in U$. Thus, we have
$$
A_{k}=\prod_{i=1}^{n}A_{i}
$$
for every $k\in\{1,\ldots,n\}$ by the~first derivative test.
The~latter implies $A_1=A_2=\ldots=A_n$.~Then
$$
nA_{1}+1-n-A_{1}^n>0,
$$
which is impossible, because $nA_{1}+1-n-A_{1}^n$ is a~decreasing
function  of $A_{1}$ vanishing at $A_{1}=1$.

The assertion of Theorem~\ref{theorem:Bishop-versus-Lichnerowicz}
is proved.

\appendix

\section{Tables}
\label{section:infinite-series}

Table~\ref{table:1} and Table~\ref{table:2}  contain
one-parameter
infinite series and sporadic cases respectively of~values of
$(a_0,a_1,a_2,a_3,d,I)$ in Theorem~\ref{theorem:technical}.
We always assume that $a_0\le\ldots\le a_3$.
The~last columns represent the~cases in~\cite{YauYu03} from which
the~sixtuples $(a_0,a_1,a_2,a_3,d,I)$ originate\footnote{Note that
sometimes a~sixtuple $(a_0,a_1,a_2,a_3,d,I)$ originates from
several cases in~\cite{YauYu03}.}. The parameter $n$ is any
positive integer.

\begin{longtable}{|c|c|c|c|}
\caption{Infinite series}\label{table:1}\\ \hline
$(a_0,a_1,a_2,a_3)$ & $d$ & $I$ & Source\\
\hline
\endhead

$(1, 3n-2, 4n-3, 6n-5)$ & $12n-9$ & $n$ & VII.2(3) \\
\hline

$(1, 3n-2, 4n-3, 6n-4)$ & $12n-8$ & $n$ & II.2(2) \\
\hline

$(1, 4n-3, 6n-5, 9n-7)$ & $18n-14$ & $n$ &VII.3(1) \\
\hline

$(1, 6n-5, 10n-8, 15n-12)$ & $30n-24$ & $n$ &  III.1(4) \\
\hline

$(1, 6n-4, 10n-7, 15n-10)$ & $30n-20$ & $n$ & III.2(2) \\
\hline

$(1, 6n-3, 10n-5, 15n-8)$ & $30n-15$ & $n$ & III.2(4) \\
\hline

$(1, 8n-2, 12n-3, 18n-5)$ & $36n-9$ & $2n$ & IV.3(3) \\
\hline

$(2, 6n-3, 8n-4, 12n-7)$ & $24n-12$ & $2n$ & II.2(4)\\
\hline

$(2, 6n+1, 8n+2, 12n+3)$ & $24n+6$ & $2n+2$ & II.2(1)\\
\hline

$(3, 6n+1, 6n+2, 9n+3)$ & $18n+6$ & $3n+3$ & II.2(1)\\
\hline

$(7, 28n-18, 42n-27, 63n-44)$ & $126n-81$ & $7n-1$ & XI.3(14)\\
\hline

$(7, 28n-17, 42n-29, 63n-40)$ & $126n-80$ & $7n+1$ & X.3(1)\\
\hline

$(7, 28n-13, 42n-23, 63n-31)$ & $126n-62$ & $7n+2$ & X.3(1)\\
\hline

$(7, 28n-10, 42n-15, 63n-26)$ & $126n-45$ & $7n+1$ & XI.3(14)\\
\hline

$(7, 28n-9, 42n-17, 63n-22)$ & $126n-44$ & $7n+3$ & X.3(1)\\
\hline

$(7, 28n-6, 42n-9, 63n-17)$ & $126n-27$ & $7n+2$ & XI.3(14)\\
\hline

$(7, 28n-5, 42n-11, 63n-13)$ & $126n-26$ & $7n+4$ & X.3(1)\\
\hline

$(7, 28n-2, 42n-3, 63n-8)$ & $126n-9$ & $7n+3$ & XI.3(14)\\
\hline

$(7, 28n-1, 42n-5, 63n-4)$ & $126n-8$ & $7n+5$ & X.3(1)\\
\hline

$(7, 28n+2, 42n+3, 63n+1)$ & $126n+9$ & $7n+4$ & XI.3(14)\\
\hline

$(7, 28n+3, 42n+1, 63n+5)$ & $126n+10$ & $7n+6$ & X.3(1)\\
\hline

$(7, 28n+6, 42n+9, 63n+10)$ & $126n+27$ & $7n+5$ & XI.3(14)\\
\hline

$(2, 2n+1, 2n+1, 4n+1)$ & $8n+4$ & $1$ & II.3(4) \\
\hline

$(3, 3n, 3n+1, 3n+1)$ & $9n+3$ & $2$ & III.5(1) \\
\hline

$(3, 3n+1, 3n+2, 3n+2)$ & $9n+6$ & $2$ & II.5(1) \\
\hline

$(3, 3n+1, 3n+2, 6n+1)$ & $12n+5$ & $2$ & XVIII.2(2)\\
\hline

$(3, 3n+1, 6n+1, 9n)$ & $18n+3$ & $2$ & VII.3(2)\\
\hline

$(3, 3n+1, 6n+1, 9n+3)$ & $18n+6$ & $2$ & II.2(2) \\
\hline

$(4, 2n+3, 2n+3, 4n+4)$ & $8n+12$ & $2$ &  V.3(4) \\
\hline

$(4, 2n+3, 4n+6, 6n+7)$ & $12n+18$ & $2$ & XII.3(17) \\
\hline

$(6, 6n+3, 6n+5, 6n+5)$ & $18n+15$ & $4$ & III.5(1) \\
\hline

$(6, 6n+5, 12n+8, 18n+9)$ & $36n+24$ & $4$ & VII.3(2)\\
\hline

$(6, 6n+5, 12n+8, 18n+15)$ & $36n+30$ & $4$ & IV.3(1) \\
\hline

$(8, 4n+5, 4n+7, 4n+9)$ & $12n+23$ & $6$ & XIX.2(2)\\
\hline

$(9, 3n+8, 3n+11, 6n+13)$ & $12n+35$ & $6$ & XIX.2(2)\\
\hline
\end{longtable}

\begin{longtable}{|c|c|c|c||c|c|c|c|}
\caption{Sporadic cases}\label{table:2}\\
\hline
$(a_0,a_1,a_2,a_3)$ & $d$ & $I$ & Source & $(a_0,a_1,a_2,a_3)$ & $d$ & $I$ & Source\\
\hline
\endhead

$(1, 3, 5, 8)$ & $16$ & $1$ & VIII.3(5) &
$(2, 3, 5, 9)$ & $18$ & $1$ & II.2(3)\\
\hline

$(3, 3, 5, 5)$ & $15$ & $1$ & I.19 &
$(3, 5, 7, 11)$ & $25$ & $1$ & X.2(3)\\
\hline

$(3, 5, 7, 14)$ & $28$ & $1$ & VII.4(4)&
$(3, 5, 11, 18)$ & $36$ & $1$ & VII.3(1)\\
\hline

$(5, 14, 17, 21)$ & $56$ & $1$ & XI.3(8)&
$(5, 19, 27, 31)$ & $81$ & $1$ & X.3(3)\\
\hline

$(5, 19, 27, 50)$ & $100$ & $1$ & VII.3(3)&
$(7, 11, 27, 37)$ & $81$ & $1$ & X.3(4)\\
\hline

$(7, 11, 27, 44)$ & $88$ & $1$ & VII.3(5)&
$(9, 15, 17, 20)$ & $60$ & $ 1$ & VII.6(3)\\
\hline

$(9, 15, 23, 23)$ & $69$ & $1$ & III.5(1) &
$(11, 29, 39, 49)$ & $127$ & $1$ & XIX.2(2)\\
\hline

$(11, 49, 69, 128)$ & $256$ & $1$ & X.3(1)&
$(13, 23, 35, 57)$ & $127$ & $1$ & XIX.2(2)\\
\hline

$(13, 35, 81, 128)$ & $256$ & $1$ & X.3(2)&
$(1, 3, 4, 6)$ & $12$ & $2$ & I.3 \\
\hline

$(1, 4, 6, 9)$ & $18$ & $2$ & IV.3(3) &
$(1, 6, 10, 15)$ & $30$ & $2$ & I.4\\
\hline

$(2, 3, 4, 7)$ & $14$ & $2$ & IX.3(1)&
$(3, 3, 4, 4)$ & $12$ & $2$ & V.3(4)\\
\hline

$(3, 4, 5, 10)$ & $20$ & $2$ & II.3(2)&
$(3, 4, 6, 7)$ & $18$ & $2$ & VII.3(10)\\
\hline

$(3, 4, 10, 15)$ & $30$ & $2$ & II.2(3)&
$(5, 13, 19, 22)$ & $57$ & $2$ & X.3(3)\\
\hline

$(5, 13, 19, 35)$ & $70$ & $2$ & VII.3(3)&
$(6, 9, 10, 13)$ & $36$ & $2$ & VII.3(8)\\
\hline

$(7, 8, 19, 25)$ & $57$ & $2$ & X.3(4)&
$(7, 8, 19, 32)$ & $64$ & $2$ & VII.3(3)\\
\hline

$(9, 12, 13, 16)$ & $48$ & $2$ & VII.6(2)&
$(9, 12, 19, 19)$ & $57$ & $2$ & III.5(1)\\
\hline

$(9, 19, 24, 31)$ & $81$ & $2$ & XI.3(20)&
$(10, 19, 35, 43)$ & $105$ & $2$ & XI.3(18)\\
\hline

$(11, 21, 28, 47)$ & $105$ & $2$ & XI.3(16)&
$(11, 25, 32, 41)$ & $107$ & $2$ & XIX.3(1)\\
\hline

$(11, 25, 34, 43)$ & $111$ & $2$ & XIX.2(2)&
$(11, 43, 61, 113)$ & $226$ & $2$ & X.3(1)\\
\hline

$(13, 18, 45, 61)$ & $135$ & $2$ & XI.3(14)&
$(13, 20, 29, 47)$ & $107$ & $2$ & XIX.3(1)\\
\hline

$(13, 20, 31, 49)$ & $111$ & $2$ & XIX.2(2)&
$(13, 31, 71, 113)$ & $226$ & $2$ & X.3(2)\\
\hline

$(14, 17, 29, 41)$ & $99$ & $2$ & XIX.2(3)&
$(5, 7, 11, 13)$ & $33$ & $3$ & X.3(3)\\
\hline

$(5, 7, 11, 20)$ & $40$ & $3$ & VII.3(3)&
$(11, 21, 29, 37)$ & $95$ & $3$ & XIX.2(2)\\
\hline

$(11, 37, 53, 98)$ & $196$ & $3$ & X.3(1)&
$(13, 17, 27, 41)$ & $95$ & $3$ & XIX.2(2)\\
\hline

$(13, 27, 61, 98)$ & $196$ & $3$ & X.3(2)&
$(15, 19, 43, 74)$ & $148$ & $3$ & X.3(1)\\
\hline

$(5, 6, 8, 9)$ & $24$ & $4$ & VII.3(2)&
$(5, 6, 8, 15)$ & $30$ & $4$ & IV.3(1)\\
\hline

$(9, 11, 12, 17)$ & $45$ & $4$ & XI.3(20)&
$(10, 13, 25, 31)$ & $75$ & $4$ & XI.3(14)\\
\hline

$(11, 17, 20, 27)$ & $71$ & $4$ & XIX.3(1)&
$(11, 17, 24, 31)$ & $79$ & $4$ & XIX.2(2)\\
\hline

$(11, 31, 45, 83)$ & $166$ & $4$ & X.3(1)&
$(13, 14, 19, 29)$ & $71$ & $4$ & XIX.3(1)\\
\hline

$(13, 14, 23, 33)$ & $79$ & $4$ & XIX.2(2)&
$(13, 23, 51, 83)$ & $166$ & $4$ & X.3(2)\\
\hline

$(6, 7, 9, 10)$ & $27$ & $5$ & XI.3(14)&
$(11, 13, 19, 25)$ & $63$ & $5$ & XIX.2(2)\\
\hline

$(11, 25, 37, 68)$ & $136$ & $5$ & X.3(1)&
$(13, 19, 41, 68)$ & $136$ & $5$ & X.3(2)\\
\hline

$(11, 19, 29, 53)$ & $106$ & $6$ & X.3(1)&
$(13, 15, 31, 53)$ & $106$ & $6$ & X.3(2)\\
\hline

$(11, 13, 21, 38)$ & $76$ & $7$ & X.3(1)&
&&&\\
\hline
\end{longtable}

\end{document}